\setlist[description]{leftmargin=3.5mm}
\theoremstyle{plain} 
\newtheorem{theorem}{Theorem}[section]
\newtheorem{lemma}      [theorem]{Lemma}
\newcommand{\rottext}[1]{\rotatebox{90}{\hbox to 20mm{\hss #1\hss}}}
\newcommand\norm[1]{\left\lVert#1\right\rVert}
\newcommand{\R}{\mathbb{R}}
\renewcommand{\norm}[2][]{\left\Vert#2\right\Vert_{#1}}
\DeclareMathOperator*{\argmin}{arg\,min}                   
\renewcommand{\t} {^{\top}}                                
\renewcommand{\phi}{\mathbf{\varphi}}
\newcommand{\bfzero}{{\bf0}}
\newcommand{\bfd}{\mathbf{d}}
\newcommand{\bfA}{\mathbf{A}}
\newcommand{\bfC}{\mathbf{C}}
\newcommand{\bfU}{\mathbf{U}}
\newcommand{\bfR}{\mathbf{R}}
\newcommand{\bfI}{\mathbf{I}}
\newcommand{\bfD}{\mathbf{D}}
\newcommand{\bfb}{\mathbf{b}}
\newcommand{\bfB}{\mathbf{B}}
\newcommand{\bfx}{\mathbf{x}}
\newcommand{\bfe}{\mathbf{e}}
\newcommand{\bfu}{\mathbf{u}}
\newcommand{\bfy}{\mathbf{y}}
\newcommand{\bfL}{\mathbf{L}}
\newcommand{\bfz}{\mathbf{z}}
\newcommand{\bfr}{\mathbf{r}}
\newcommand{\bfs}{\mathbf{s}}
\newcommand{\bfv}{\mathbf{v}}
\newcommand{\bfV}{\mathbf{V}}
\newcommand{\bfQ}{\mathbf{Q}}
\newcommand{\bfS}{\mathbf{S}}
\newcommand{\bfY}{\mathbf{Y}}
\newcommand{\trace}{{\mathop{\mathrm{tr}}}}
\newcommand{\calN}{\mathcal{N}}
\newcommand{\calO}{\mathcal{O}}
\newcommand{\calR}{\mathcal{R}}
\newcommand{\bfepsilon}{{\boldsymbol{\epsilon}}}
\newcommand{\bfmu}{{\boldsymbol{\mu}}}
\newcommand{\bfxi}{{\boldsymbol{\xi}}}
\newcommand{\bbE}{\mathbb{E}}
\newcommand{\bbR}{\mathbb{R}}
\newlength\iwidth
\newlength\iheight
\newcommand{\true}{{\rm true}}
\newdimen\iwidth
\newdimen\iheight
\theoremstyle{plain}
\newcommand{\TheTitle}{Hybrid Projection Methods for Large-scale Inverse Problems with Mixed Gaussian Priors}
\title{{\TheTitle}}
\author{
Taewon Cho\thanks{Department of Mathematics, Virginia Tech, Blacksburg, VA, USA
  (taewon88@vt.edu).}
 \and
  Julianne Chung\thanks{Department of Mathematics, Computational Modeling and Data Analytics Division, Academy of Integrated Science, Virginia Tech, Blacksburg, VA, USA
    (jmchung@vt.edu, http://www.math.vt.edu/people/jmchung/).}
  \and
Jiahua Jiang\thanks{Department of Mathematics, Virginia Tech, Blacksburg, VA, USA
  (jiahua@vt.edu).}
}
\definecolor{darkcyan}{rgb}{0.0, 0.55, 0.55}
\pgfplotsset{compat=1.14}
\begin{document}
\maketitle

\begin{abstract}
When solving ill-posed inverse problems, a good choice of the prior is critical for the computation of a reasonable solution.  A common approach is to include a Gaussian prior, which is defined by a mean vector and a symmetric and positive definite covariance matrix, and to use iterative projection methods to solve the corresponding regularized problem.
However, a main challenge for many of these iterative methods is that the prior covariance matrix must be known and fixed (up to a constant) before starting the solution process.  
In this paper, we develop hybrid projection methods for inverse problems with mixed Gaussian priors where the prior covariance matrix is a convex combination of matrices and the mixing parameter and the regularization parameter do not need to be known in advance. 
Such scenarios may arise when data is used to generate a sample prior covariance matrix (e.g., in data assimilation) or when different priors are needed to capture different qualities of the solution.
The proposed hybrid methods are based on a mixed Golub-Kahan process, which is an extension of the generalized Golub-Kahan bidiagonalization, and a distinctive feature of the proposed approach is that \textit{both} the regularization parameter and the weighting parameter for the covariance matrix can be estimated automatically during the iterative process. Furthermore, for problems where training data are available, various data-driven covariance matrices (including those based on learned covariance kernels) can be easily incorporated.  Numerical examples from tomographic reconstruction demonstrate the potential for these methods.

\end{abstract}

\textbf{Keywords}: generalized Golub-Kahan, hybrid projection methods, Tikhonov regularization, Bayesian inverse problems, sample covariance matrix, tomography

\section{Introduction} \label{sec:introduction}
For many imaging systems, the ability to obtain good image reconstructions from observed data requires the inclusion of a suitable prior.
 
Priors provide a systematic and efficient means to describe in probabilistic terms any prior knowledge about the unknowns. Oftentimes prior knowledge will come from a \textit{combination} of sources, and striking a good balance of information is critical.  For example, priors may be learned from available training data, but bias in the reconstructions can be a big concern (e.g., when the training set is small or the desired image is very different from the training set).  Thus, a safer approach is to include a prior that combines learned information with conventional smoothness properties. In other scenarios (e.g. in seismic tomography), the desired solution may consist of components with different smoothness properties, and the correct mixture of smoothness priors can be difficult to know a priori. 
Using mixed Gaussian priors, where the prior covariance matrix can be represented as a convex combination of matrices, is a common approach to incorporate different prior covariance matrices.  However, various computational challenges arise for problems where the number of unknowns is very large and the regularization and mixing parameter are not known in advance.  We address these challenges by developing hybrid iterative projection methods for the efficient computation of solutions to inverse problems with mixed Gaussian priors.  By exploiting a project-then-regularize framework, we enable statistical optimization tools for selecting the regularization parameter and the mixing parameter automatically, which would be very costly for the original problem.

We are interested in linear inverse problems of the form,
\begin{equation}
\label{eq:problem}
\bfd = \bfA\bfs + \bfepsilon
\end{equation}
where $\bfd \in \mathbb{R}^{m}$ contains the observed data, $\bfA \in \mathbb{R}^{m \times n}$ models the forward process, $\bfs \in \mathbb{R}^{n}$ represents the desired parameters, and $\bfepsilon \in \mathbb{R}^{m}$ represents noise in the data. We assume that $\bfepsilon \sim \mathcal{N}(\bfzero,\bfR)$, where $\bfR$ is a symmetric positive definite matrix whose inverse and square root are inexpensive (e.g., a diagonal matrix). The goal of the inverse problem is to compute an approximation of $\bfs$, given $\bfd$ and $\bfA$.

Due to ill-posedness, small errors in the data may lead to large errors in the computed
approximation of $\bfs$, and regularization is required to stabilize the inversion process. We follow a Bayesian framework, where we assume a prior for $\bfs$. That is, we treat $\bfs$ as a Gaussian random variable with mean 
vector

$\bfmu \in \mathbb{R}^{n}$ and covariance matrix $\bfQ\in \bbR^{n \times n}$. That is,  $\bfs \sim \mathcal{N}(\bfmu,\lambda^{-2}\bfQ) $, where 
$\lambda$ is a scaling parameter (yet to be determined) for the precision matrix.

In many applications, the choice of $\bfQ$ is pre-determined (e.g., using expert knowledge) and is chosen to enforce smoothness or regularity conditions on the solution \cite{chung2017generalized,hochstenbach2010iterative,buccini2017iterated}. However, in some cases, there is not enough information to determine $\bfQ$ completely or expensive procedures are needed to determine an informative subset of covariates from a set of candidates (e.g., in geophysical imaging \cite{yadav2016statistical, yao1999calculating, zhang1995estimation}).  These scenarios motivate us to consider mixed Gaussian priors, where the covariance matrix can be represented as a convex combination of matrices. Without loss of generality we consider prior covariance matrices of the form,
\begin{equation}
\label{eq:Qsum}
\bfQ = \gamma\bfQ_1 + (1-\gamma) \bfQ_2
\end{equation}
where $\bfQ_1$ is a symmetric positive definite matrix, $\bfQ_2$ is a symmetric positive semi-definite matrix, and mixing parameter $0 < \gamma \leq 1$. We consider the case where computing matrix-vector products with $\bfQ_1$ is easy, but accessing $\bfQ_1^{-1}$ or its symmetric factorization (e.g., Cholesky or eigenvalue factorization) is not feasible.  Such scenarios arise, for example, when the prior covariance matrix is modeled entry-wise using covariance kernels.  In such cases, the main challenge is that the resulting covariance matrices are large and dense, and factorizing or inverting them can be computationally prohibitive.  However, matrix-vector multiplications can be done efficiently (e.g., via FFT embedding).  A wide range of kernels, including nonseparable spatio-temporal kernels \cite{chung2017generalized}, can be included. We assume that matrix-vector products with $\bfQ_2$ can be done efficiently. 

Covariance matrices of the form~\eqref{eq:Qsum} are becoming more common, especially in modern imaging applications where data (e.g., in the form of training images) are playing a larger role in the development of reconstruction algorithms \cite{arridge2019solving}.
Suppose we are given a dataset consisting of $N$ samples, $\bfs^{(i)} \in \mathbb{R}^{n}, i=1,2,\ldots,N$. 
Then the training data can be used to obtain an unbiased estimator of an $n \times n$ sample covariance matrix,
\begin{equation}
\label{eq:covar-est}
\widehat\bfQ = \frac{1}{N}\sum_{i=1}^{N}(\bfs^{(i)}-\bar{\bfs})(\bfs^{(i)} - \bar{\bfs})\t,
\end{equation}
where $\bar{\bfs} = \frac{1}{N}\sum_{i=1}^N \bfs^{(i)}$ is the sample mean.
Notice that 
 $\widehat\bfQ = \bfS \bfS\t$, where the symmetric factor is defined as $\bfS=\frac{1}{\sqrt{N}}\left(\begin{bmatrix}  \bfs^{(1)} & \dots & \bfs^{(N)} \end{bmatrix} -\bar{\bfs}\otimes\textbf{1}^\top\right)$ with $\textbf{1}\in\R^N$ denoting the vector whose elements are all $1$. For any vector $\bfx \in \bbR^n$, multiplication with $\widehat \bfQ$ can be done efficiently if $N << n$, e.g., using the following order of operations $\bfS(\bfS\t \bfx)$. However, notice that $\widehat\bfQ$ is likely positive semi-definite rather than positive definite, so it is common to use $\widehat \bfQ + \gamma \bfI$ where $\gamma$ is a nudging term.  Such approaches are known as sample based priors \cite{calvetti2005priorconditioners}.   Another common approach is to use a convex combination, i.e., the prior covariance matrix is given as
\begin{equation}
  \label{eq:convexcomb}
  \bfQ = \gamma \bfD + (1-\gamma) \widehat\bfQ
\end{equation}
where $\bfD$ is chosen to be the identity matrix or a suitably chosen diagonal or correlation matrix, which ensures that $\bfQ$ is positive definite, and $\gamma \in \bbR$ is called the mixing parameter.  The matrix in \eqref{eq:convexcomb} is called a shrinkage estimator of the covariance matrix \cite{schafer2005shrinkage}.
It is worth noting that covariance matrices of the form ~\eqref{eq:convexcomb} are also used in hybrid methods for data assimilation that combine an ensemble Kalman filter system with a variational (e.g., 3D-Var) system \cite{asch2016data}.  These methods require careful tuning of the so-called blending parameter $\gamma$, and many of the existing approaches require $\gamma$ to be fixed in advance. We do not assume this.

Previous works on combining training data with regularization techniques typically follow an optimal experimental design or empirical Bayes risk minimization framework \cite{haber2003learning, chung2011designing}.  More recently, there has been significant work on using training data in the context of machine learning to learn regularization functionals (e.g., \cite{li2018nett,schwab2018deep}) or to learn the ``invisible'' regions (e.g., \cite{bubba2019learning}).  The area of data-driven machine learning is currently a hot topic \cite{lucas2018using,arridge2019solving}, where the main goal is to determine new ways to combine physical models with deep learning techniques. In this work, we incorporate training data in a Bayesian framework and exploit tools from numerical linear algebra not only to compute solutions efficiently but also to determine the appropriate weighting of the training data.

In this paper we develop a hybrid iterative projection method that is based on a mixed, generalized Golub-Kahan process to approximate the MAP estimate,
\begin{equation}
	\label{eq:MAP}
	\bfs_{\rm MAP} = \argmin_\bfs \frac{1}{2}\norm[\bfR^{-1}]{\bfA\bfs - \bfd}^2 + \frac{\lambda^2}{2} \norm[\bfQ^{-1}]{\bfs - \bfmu}^2.
\end{equation}
where $\bfQ$ is of the form~\eqref{eq:Qsum}. Our approach can handle a wide range of scenarios, including data-informed regularization terms that use training or test images to define the prior. We assume that $\gamma$ is not known in advance and neither the inverse nor the factorization of $\bfQ$ is available. The proposed method has two distinctive features. First, we assume that \emph{both} $\gamma$ and $\lambda$ are unknown a priori and we estimate them during the solution process.  For problems where $\gamma$ is fixed in advance, generalized hybrid methods \cite{chung2017generalized} can be directly applied.  However, developing a hybrid method where $\gamma$ can be selected adaptively is not an obvious extension. 
We develop an iterative hybrid approach

where the problem is projected onto generalized Krylov subspaces of small but increasing dimension and the regularization parameter and mixing parameter can be simultaneously and automatically selected. Second, we describe and investigate various scenarios where training data can be used to define $\bfQ_1$ and $\bfQ_2$, so
our approach can be considered a learning approach for the regularization term.

An outline for the paper is as follows.  In Section~\ref{sec:background} we provide some background on Gaussian priors and focus on various data-driven prior covariance matrices.  Then in Section~\ref{sec:agenGK}, we describe mixed, generalized hybrid projection methods for approximating the MAP estimate~\eqref{eq:MAP}, where $\bfQ$ is of the form~\eqref{eq:Qsum}.  The approach consists of two-steps: (1) Project the problem onto a subspace of small but increasing dimension using an extension of the generalized Golub-Kahan bidiagonalization approach. (2) Solve the projected problem where the regularization parameter $\lambda$ and mixing parameter $\gamma$ can be selected automatically. Various regularization paremeter selection techniques will be investigated, and some theoretical results will be provided. 
In Section~\ref{sec:numerics} numerical results on various image processing applications show the potential benefits and flexibility of these methods.  Conclusions are provided in Section~\ref{sec:conclusions}.

\section{Mixed Gaussian priors} 
\label{sec:background}
In this section, we motivate the need for mixed Gaussian priors and draw some connections to existing works on multi-parameter Tikhonov regularization and shrinkage estimation.

To begin, we focus on using Gaussian random fields to represent prior information and summarize some common choices for the (unscaled) prior covariance matrix $\bfQ$.  Oftentimes, the covariance matrix is generated using a covariance function (also called a kernel function).  Covariance functions are crucial in many fields and 

encode assumptions about the form of the function that we are modeling.  In most cases, the prior covariance matrix $\bfQ$ is large and dense with entries directly computed as $\bfQ_{ij} =  \kappa(\bfz_{i}, \bfz_j)$, where $\{\bfz_{i}\}_{i=1}^{n}$ are the spatial points in the domain and $\kappa(\cdot,\cdot)$ is a covariance kernel function.  Some commonly used parametric covariance functions \cite{rasmussen2003gaussian} are provided in Table~\ref{cov_table}.  
\begin{table}[bthp]
\centering
\begin{tabular}{ |c|c|} 
 \hline
  & covariance kernel function  \\ 
 \hline
 squared exponential & $\text{exp}\left(-\frac{r^2}{2\ell^2}\right)$\\[5pt]
 Mat$\acute{\text{e}}$rn & $\frac{1}{2^{\nu -1}\Gamma(\nu)} \left( \frac{\sqrt{2\nu }r}{\ell}\right)^{\nu}K_{\nu}\left(\frac{\sqrt{2\nu} r}{\ell}\right)$ \\[5pt]

$\gamma-$exponential & $\text{exp}\left(-\left(\frac{r}{\ell}\right)^{\gamma}\right)$ \\[5pt]
rational quadratic & $\left(1 + \frac{r^2}{2\nu\ell^2}\right)^{-\nu}$ \\[5pt]
sinc & $\frac{\sin(\nu r)}{\nu r}$ \\[5pt]
 \hline
\end{tabular}
\caption{Summary of commonly-used covariance functions. The covariance functions are written either as functions of $\bfz_{i}$ and $\bfz_{j}$, or as a function of $r = |\bfz_{i} - \bfz_{j}|$ and depend on $\ell$ or $\ell$ and $\nu$.  $\Gamma$ is the Gamma function and $K_{\nu}(\cdot)$ is the modified Bessel function of the second kind of order $\nu$.}
\label{cov_table}
\end{table}

For some kernel choices, the precision matrix (i.e., the inverse of the covariance matrix) is sparse or structured, so working with $\bfQ^{-1}$ or its symmetric factorization has obvious computational advantages.

However, in many applications, the precision matrix is not readily available, and the aim is to develop computational methods that can work with $\bfQ$ directly and avoid the need for the inverse or symmetric factorization. Such covariance kernels may arise
in dynamic scenarios with nonseparable, spatio-temporal priors \cite{chung2018efficient, long2011state, galkaspatiotemporal} or from spatially-variant priors \cite{dong2018tomographic, yang2016spatially}.  
It is worth mentioning that in a truly Bayesian framework, the regularization parameter and the covariance kernel parameters could be included as hyperparameters and explored using MCMC methods \cite{bardsley2018computational}, but the computational costs of this approach would be very high.

One reason to use Gaussian mixtures as prior distributions is that it allows greater flexibility in the definition of the prior.  In this paper, we consider a mixture of two Gaussians, but one could consider more general mixtures.  
From a statistical viewpoint, a general formulation with $N$ Gaussian random vectors would correspond to a sum of covariance matrices.
That is, let $\bfx_1, ..., \bfx_N$ be $N$ mutually independent $n\times 1$ normal random vectors having means $\bfmu_1, ... \bfmu_N$ and covariance matrices $\bfV_1, ...\bfV_N.$ Let $\bfB_1,...\bfB_N$ be real $L \times n$ full rank matrices.  Then the $L \times 1$ random vector
\begin{equation}
    \bfy = \sum_{i=1}^N \bfB_i \bfx_i
\end{equation}
has a normal distribution with mean $\bbE \bfy = \sum_{i=1}^N \bfB_i \bfmu_i$ and covariance matrix of the form $Cov(\bfy) = \sum_{i=1}^N \bfB_i \bfV_i \bfB_i\t.$ Thus, a Gaussian mixture prior corresponds to an assumption that the desired solution can be represented as a linear combination of Gaussian realizations (e.g., with different smoothness properties).

In the context of inverse problems, we point out a connection between mixed Gaussian priors and multi-parameter Tikhonov regularization.  The basic idea of multi-parameter Tikhonov regularization, see e.g. \cite{Wang2012,LuPereverzev2011,BazanBorgesFrancisco2012,GazzolaNovati2013}, is to solve a problem of the form,
\begin{equation}
    \min_\bfs \norm[\bfR^{-1}]{\bfA \bfs - \bfd}^2
    + \sum_{i=1}^N \lambda_i^2 \norm[2]{\bfL_i \bfs}^2,
\end{equation}
 where $\lambda_i \in \bbR$ is the regularization parameter corresponding to regularization matrix $\bfL_i$ for $i=1, \ldots, N$. By including multiple penalty terms, this approach can enforce different smoothness properties (e.g, at different frequency bands) and avoid difficulties in having to select just one regularization matrix.
 In a Bayesian framework, the multi-parameter Tikhonov solution can be interpreted as a MAP estimate, under the assumption of a Gaussian prior with mean $\bfzero$ and covariance matrix 
 $ \left(\sum_{i=1}^N \lambda_i^2\bfL_i\t \bfL_i \right)^{-1}
 $. Notice that except for in very limited scenarios, this is not the same as using mixed Gaussian priors, since here the precision matrix (not the covariance matrix) is represented as a sum of matrices.

\subsection{Data-driven prior covariance matrices}
With the increasing amount of and access to data in many applications, an important and challenging task is to determine how to efficiently and effectively incorporate prior knowledge in the form of training data both in the solution computation process and the subsequent data analyses.  In this section, we describe various examples where training data can be used to define the prior covariance matrix.  For all cases, we assume that training data is provided and the sample covariance matrix \eqref{eq:covar-est} has the form $\widehat\bfQ = \bfS \bfS\t$.

As described in the introduction, the most common approach is to take $\bfQ_2 = \widehat\bfQ$ and $\bfQ_1 = \bfD$ where $\bfD$ is easy to invert (e.g., diagonal or identity matrix). In this case, a very popular approach called shrinkage estimation of covariance matrices, or more general biased estimation, can be used to reduce the variance of the estimator.  Typical shrinkage targets are diagonal matrices (e.g., including the identity matrix), and approaches to estimate the optimal shrinkage intensity $\gamma$ have been proposed by Ledoit and Wolf, Rao and Blackwell, and others \cite{LW04,asch2016data,schafer2005shrinkage,chen2009shrinkage}. 

Another approach to incorporate training data is to force some structure or functional form on the prior covariance kernel function.  For kernel functions that depend on a few parameters, the training data can be used to estimate these parameters.  A similar idea was considered in \cite{haber2003learning} where training data was used to learn parameters defining the regularization functional. However, that approach requires solving an expensive constrained optimization problem, and the learned regularization functional is tailored to the forward operator and the noise level. We consider the case where the training data come from a prior defined by a covariance kernel function (e.g., for simplicity, we consider Mat\'ern kernels). We use the training data to learn the parameters defining the prior.  This reduces to an optimization problem where the goal is to learn two parameters $\nu$ and $\ell$ from the training data by solving the optimization problem,
    \begin{equation}
        (\hat \nu, \hat \ell) = \argmin_{\nu>0,\ell>0}\norm[F]{\bfQ(\nu,\ell) - \widehat \bfQ}^2.
    \end{equation}
Once the parameters are computed, they can be used to define $\bfQ_1 = \bfQ(\hat\nu,\hat\ell)$, which can be used directly in generalized hybrid methods, or they can be combined with the sample covariance matrix, i.e., $\bfQ$ as in~\eqref{eq:Qsum} with $\bfQ_1 = \bfQ(\hat\nu,\hat\ell)$ and $\bfQ_2 = \widehat \bfQ$, and solvers described in Section \ref{sec:agenGK} can be used.

Next, we describe some computationally efficient methods to estimate $\hat \nu$ and $\hat \ell.$

Notice that
\begin{align}
    \|\bfQ(\nu,\ell) - \widehat{\bfQ}\|^2_F & = \trace(\bfQ(\nu,\ell) - \widehat{\bfQ})^\top(\bfQ(\nu,\ell) - \widehat{\bfQ}) \\
    & =  \bbE( \| (\bfQ(\nu,\ell) - \widehat{\bfQ}) \bfxi \|^2_2)
   \end{align}
 where $\bfxi$ is a random variable such that $\bbE \bfxi = \bfzero$ and $\bbE (\bfxi \bfxi\t) = \bfI$.  Although stochastic optimization methods \cite{shapiro2009lectures} could be use here, we follow an approximation approach where we use a Hutchinson trace estimator.  That is, we let  $\bfxi^{(i)} \in \bbR^n$ for $i=1,2,\ldots,M$ be realizations of a Rademacher distribution (i.e., $\bfxi$ consists of $\pm1$ with equal probability), and we consider the approximate optimization problem,
 \begin{equation}
    \label{ref:RademacherEst}
    (\check{\nu},\check{\ell}) =\argmin_{\nu>0,\ell>0} 
    \frac{1}{M}\sum_{i=1}^{M}  \|(\bfQ(\nu,\ell) - \widehat\bfQ)\bfxi^{(i)} \|^2_2.
\end{equation}

We mention that for problems without training data, semivariogram hyperparameters were investigated in \cite{bardsley2018semivariogram} to estimate Mat{\' e}rn parameters from the data.

\section{Hybrid projection methods for mixed Gaussian priors}
\label{sec:agenGK}
In this section, we describe a hybrid projection method to approximate the MAP estimate~\eqref{eq:MAP}. The distinguishing factor of this approach compared to generalized Golub-Kahan (genGK) hybrid methods \cite{chung2017generalized} is that we address problems where the prior covariance matrix is of the form~\eqref{eq:Qsum}. That is, we consider priors of the form $\bfs \sim\calN(\bfmu, \lambda^{-2}(\gamma \bfQ_1 + (1-\gamma) \bfQ_2))$, and exploit a hybrid projection framework to enable tools for selecting both the regularization parameter $\lambda$ and the mixing parameter $\gamma$ simultaneously.
Using the following change of variables,
$$\bfx = \bfQ^{-1}(\bfs -\bfmu), \quad \bfb = \bfd - \bfA \bfmu,$$
we see that solving~\eqref{eq:MAP} is equivalent to solving
\begin{equation}
	\label{eq:transformed}
	\min_\bfx \frac{1}{2}\norm[\bfR^{-1}]{\bfA\bfQ\bfx - \bfb}^2 + \frac{\lambda^2}{2} \norm[\bfQ]{\bfx}^2.
\end{equation}

If $\gamma$ is known in advance, we can directly apply the genGK hybrid method and estimate $\lambda$ automatically \cite{chung2017generalized}. However, in many cases, we don't know $\gamma$ in advance, so we want to estimate $\gamma$ during the iterative process.  For this, we develop a variant of the genGK bidiagonlization which we call a \emph{mixed} Golub-Kahan (mixGK) process.
Each iteration of the mixGK process requires two steps.  The first step is to run one iteration of the genGK bidiagonalization process with $\bfQ_1$. The second step incorporates $\bfQ_2$ so that the regularized problem can be iteratively projected onto a smaller subspace, and $\gamma$ and $\lambda$ can \textit{both} be selected automatically.  Next we describe the mixGK process in detail.

Given matrices $\bfA$, $\bfR$, $\bfQ_1$, and vector $\bfb,$ with initializations $\beta_1 = \norm[\bfR^{-1}]{\bfb}$, $\bfu_1 = \bfb/\beta_1$ and $\alpha_1 \bfv_1 = \bfA\t \bfR^{-1} \bfu_1$,
 the $k$th iteration of the genGK bidiagonalization procedure with $\bfQ_1$ generates vectors $\bfu_{k+1}$ and $\bfv_{k+1}$ such that
\begin{align*}
	\beta_{k+1} \bfu_{k+1} & = \bfA \bfQ_1 \bfv_k -\alpha_k \bfu_k\\
		\alpha_{k+1} \bfv_{k+1} & = \bfA\t \bfR^{-1} \bfu_{k+1} -\beta_{k+1} \bfv_k,
	\end{align*}
where scalars $\alpha_i, \beta_i \geq 0$ are chosen such that $\norm[\bfR^{-1}]{\bfu_i} = \norm[\bfQ_1]{\bfv_i} = 1$. At the end of $k$ steps, we have
\[ \bfB_k \equiv \> \begin{bmatrix}
\alpha_1 \\ \beta_2 & \alpha_2 \\ & \beta_3 & \ddots \\ & & \ddots & \alpha_k \\ & & & \beta_{k+1}
\end{bmatrix}\,,  \qquad \bfU_{k+1} \equiv [\bfu_1,\dots,\bfu_{k+1}],\quad \mbox{and} \quad \bfV_k \equiv [\bfv_1,\dots,\bfv_k],\]
where the following relations hold up to machine precision,
\begin{align}\label{e_bk}
\bfU_{k+1}\beta_1 \bfe_1  =  &\> \bfb \\ \label{e_vk}
\bfA \bfQ_1 \bfV_k = & \>\bfU_{k+1} \bfB_k \\ \label{e_uk}
\bfA\t \bfR^{-1} \bfU_{k+1} = & \> \bfV_k \bfB_k\t + \alpha_{k+1}\bfv_{k+1}\bfe_{k+1}\t\,.
\end{align}
Furthermore, in exact arithmetic, matrices $\bfU_{k+1}$ and $\bfV_k$ satisfy the following orthogonality conditions
\begin{equation}
	\label{eq:orthog}
	\bfU_{k+1}\t \bfR^{-1} \bfU_{k+1} = \bfI_{k+1} \qquad \mbox{and} \qquad \bfV_k\t \bfQ_1 \bfV_k = \bfI_k.
\end{equation}
If we let $\widetilde{\bfU}_{k+1} = \bfL_{\bfR}\bfU_{k+1}$ where $\bfR^{-1} = \bfL\t_{\bfR}\bfL_{\bfR}$, then $\widetilde{\bfU}\t_{k+1}\widetilde{\bfU}_{k+1} = \bfI_{k+1}$.

Next, in order to incorporate $\bfQ_2$, we additionally compute $m \times k$ matrix $\bfL_{\bfR}\bfA \bfQ_2 \bfV_k$. Assuming that the columns of $\widetilde{\bfU}_{k+1}$ and $\bfL_{\bfR}\bfA \bfQ_2 \bfV_k$ are linearly independent, we can compute the skinny QR factorization, $(\bfI - \widetilde{\bfU}_{k+1} \widetilde{\bfU}_{k+1}\t)\bfL_{\bfR}\bfA \bfQ_2 \bfV_k = \bfY_k \bfR_k$ where $\bfY_{k}\in\bbR^{m \times k}$
contains orthonormal columns and $\bfR_k \in \bbR^{k\times k }$ is upper triangular.  Notice that since column vectors in $\bfY_k$ and $\widetilde \bfU_{k+1}$ are orthogonal, we get the skinny QR factorization,
\begin{equation}
	\label{eq:skinnyQR}
	\begin{bmatrix}
		\widetilde{\bfU}_{k+1} & \bfL_{\bfR}\bfA \bfQ_2 \bfV_k
	\end{bmatrix} = \begin{bmatrix}
		\widetilde{\bfU}_{k+1} & \bfY_k
\end{bmatrix}
\begin{bmatrix}
	\bfI_{k+1} & \widetilde{\bfU}_{k+1}\t \bfL_{\bfR}\bfA \bfQ_2 \bfV_k \\
	\bfzero & \bfR_k
\end{bmatrix}.
\end{equation}
The mixGK process is summarized in Algorithm~\ref{alg:agenGK}.

\begin{algorithm}[bthp]
\begin{algorithmic}[1]
\REQUIRE Matrices $\bfA$, $\bfR$, $\bfQ_1$ and $\bfQ_2$, and vector $\bfb$.
\STATE $\beta_1 \bfu_1 = \bfb,$ where $\beta_1 = \norm[\bfR^{-1}]{\bfb}$
\STATE $\alpha_1 \bfv_1 = \bfA\t \bfR^{-1}\bfu_1$
\FOR {$k=1, 2, \dots$}
\STATE $\beta_{k+1}\bfu_{k+1} = \bfA{\bfQ_1}\bfv_k - \alpha_k \bfu_k$, where $\beta_{k+1} = \norm[\bfR^{-1}]{\bfA{\bfQ_1}\bfv_k - \alpha_k \bfu_k}$
\STATE $\alpha_{k+1}\bfv_{k+1} = \bfA\t \bfR^{-1} \bfu_{k+1} - \beta_{k+1} \bfv_k$, where $\alpha_{k+1} = \norm[\bfQ_1]{\bfA\t \bfR^{-1} \bfu_{k+1} - \beta_{k+1} \bfv_k}$
\STATE $[\bfY_k, \bfR_k] = qr((\bfI - \widetilde{\bfU}_{k+1}\widetilde{\bfU}_{k+1}\t)\bfL_{R}\bfA \bfQ_2 \bfV_k, 0);$
\ENDFOR
\end{algorithmic}
\caption{mixed Golub-Kahan (mixGK) process}
\label{alg:agenGK}
\end{algorithm}
Notice that in addition to the computational cost of the genGK bidiagonalization, which includes one matrix-vector product with $\bfA$, one with $\bfA\t$, two with $\bfQ_1$, and two solves with $\bfR$, each iteration of the mixGK process requires one matrix-vector product with $\bfQ_2$ and a QR factorization in step 6.  Instead of performing a standard QR factorization on an $m$-by-$k$ matrix, an efficient rank-one update strategy can be used to alleviate the computational cost. More specifically, we will describe it using mathematical induction. Let
\begin{equation}
\label{eq:rankone}
(\bfI - \widetilde{\bfU}_{k}\widetilde{\bfU}_{k}\t)\bfL_{R}\bfA \bfQ_2 \bfV_{k-1} = \bfY_{k-1}\bfR_{k-1}
\end{equation}
be the skinny QR factorization, 
where $\bfY_{k-1}\t \bfY_{k-1} = \bfI_{k-1}$ and $\bfR_{k-1}$ is an upper triangular matrix.
Define $\widetilde{\bfU}_{k+1} = \begin{bmatrix}
    \widetilde{\bfU}_{k} & \widetilde{\bfu}_{k+1}
\end{bmatrix}$  and $\bfV_{k} = \begin{bmatrix}
    \bfV_{k-1} & \bfv_{k}
\end{bmatrix}$. Then by (\ref{eq:rankone}), we have
\begin{align*}
(\bfI - \widetilde{\bfU}_{k+1}\widetilde{\bfU}_{k+1}\t)\bfL_{R}\bfA \bfQ_2 \bfV_{k} &= \begin{bmatrix}
    (\bfI - \widetilde{\bfU}_{k+1}\widetilde{\bfU}_{k+1}\t)\bfL_{R}\bfA \bfQ_2 \bfV_{k-1} & (\bfI - \widetilde{\bfU}_{k+1}\widetilde{\bfU}_{k+1}\t)\bfL_{R}\bfA \bfQ_2 \bfv_{k}
\end{bmatrix}\\
&=\begin{bmatrix}
    (\bfI - \widetilde{\bfU}_{k}\widetilde{\bfU}_{k}\t - \widetilde{\bfu}_{k+1}\widetilde{\bfu}_{k+1}\t )\bfL_{R}\bfA \bfQ_2 \bfV_{k-1} & (\bfI - \widetilde{\bfU}_{k+1}\widetilde{\bfU}_{k+1}\t)\bfL_{R}\bfA \bfQ_2 \bfv_{k}
    \end{bmatrix}\\
&=\begin{bmatrix}
    \bfY_{k-1}\bfR_{k-1} - \widetilde{\bfu}_{k+1}\widetilde{\bfu}_{k+1}\t \bfY_{k-1}\bfR_{k-1} & (\bfI - \widetilde{\bfU}_{k+1}\widetilde{\bfU}_{k+1}\t)\bfL_{R}\bfA \bfQ_2 \bfv_{k}
    \end{bmatrix}.
\end{align*}
Since the first matrix is a rank-one update of a QR factorization, its QR factorization can be obtained in
$\calO(mk)$ operations \cite{daniel1976reorthogonalization}.  That is, we have
$$\bfY_{k-1}\bfR_{k-1} - \widetilde{\bfu}_{k+1} (\bfR_{k-1}\t\bfY_{k-1}\t\widetilde{\bfu}_{k+1})\t =  \widehat{\bfY}_{k-1}\widehat{\bfR}_{k-1}$$
where $\widehat{\bfY}_{k-1}\t\widehat{\bfY}_{k-1} = \bfI_{k-1}$ and $\widehat{\bfR}_{k-1}$ is an upper triangular matrix.
Finally, let $\widehat{\bfv}_{k} = (\bfI - \widetilde{\bfU}_{k+1}\widetilde{\bfU}_{k+1}\t)\bfL_{R}\bfA \bfQ_2 \bfv_{k}$, then one step of the Gram-Schmidt process gives the desired QR factorization, $$\begin{bmatrix}
    \widehat{\bfY}_{k-1}\widehat{\bfR}_{k-1} & \widehat{\bfv}_{k}
\end{bmatrix} = {\bfY}_{k}{\bfR}_{k}.$$

\subsection{Solving the projected problem}
Using the mixGK process described above, we now describe a hybrid iterative projection method to solve~\eqref{eq:transformed}. In particular, we consider the projected problem,
\begin{equation}
\label{prob:projected}
	\min_{\bfx \in \calR(\bfV_k)}\frac{1}{2} \norm[\bfR^{-1}]{\bfA\bfQ\bfx - \bfb}^2 + \frac{\lambda^2}{2} \norm[\bfQ]{\bfx}^2
\end{equation}
where $\calR(\cdot)$ denotes the column space.  Let $\bfx = \bfV_k \bfy$ where $\bfy\in\bbR^k$. Then using the relationships from the mixGK process, we obtain the equivalent problems,
\begin{align}
\min_\bfy & \frac{1}{2} \norm[\bfR^{-1}]{\gamma\bfA \bfQ_1 \bfV_k \bfy + (1-\gamma) \bfA \bfQ_2 \bfV_k \bfy - \bfb}^2 + \frac{\lambda^2}{2} \bfy\t \bfV_k\t (\gamma\bfQ_1+ (1-\gamma) \bfQ_2) \bfV_k \bfy\\
	\min_\bfy & \frac{1}{2}\norm[2]{\gamma\widetilde{\bfU}_{k+1} \bfB_k \bfy + (1-\gamma) \bfL_{\bfR}\bfA \bfQ_2 \bfV_k \bfy - \bfL_{\bfR}\bfb}^2 + \frac{\lambda^2\gamma}{2} \bfy\t \bfy + \frac{\lambda^2 (1-\gamma)}{2} \bfy\t \bfV_k\t \bfQ_2 \bfV_k \bfy \\
	\min_\bfy & \frac{1}{2}\norm[2]{\begin{bmatrix}
	\widetilde{\bfU}_{k+1} & \bfL_{\bfR}\bfA \bfQ_2 \bfV_k \end{bmatrix} \begin{bmatrix}\gamma\bfB_k \\ (1-\gamma)  \bfI_k \end{bmatrix}\bfy - \bfL_{\bfR}\bfb}^2 + \frac{\lambda^2\gamma}{2} \norm[2]{\bfy}^2 + \frac{\lambda^2 (1-\gamma)}{2} \bfy\t \bfV_k\t \bfQ_2 \bfV_k \bfy.
	\label{eq:notorthog}
\end{align}
Using equation (\ref{eq:skinnyQR}) and
the fact that
\begin{equation}
	\begin{bmatrix}
		\widetilde{\bfU}_{k+1} & \bfY_{k}
	\end{bmatrix} \begin{bmatrix}
	\beta_1 \bfe_1 \\ \bf0
	\end{bmatrix} = \widetilde{\bfU}_{k+1} (\beta_1 \bfe_1) = \bfL_{\bfR}\bfb
\end{equation}
where 
$\begin{bmatrix}\widetilde{\bfU}_{k+1} & \bfY_k \end{bmatrix}$ contains orthonormal columns (so it can be taken out of the norm), the projected, regularized problem becomes
\begin{equation}
	\label{eq:projectedproblem}
	\min_\bfy \frac{1}{2}\norm[2]{\begin{bmatrix}
 \bfI_{k+1} & \widetilde{\bfU}_{k+1}\t \bfL_{\bfR}\bfA \bfQ_2 \bfV_k \\
 \bfzero & \bfR_k
 \end{bmatrix}\begin{bmatrix}\gamma\bfB_k \\ (1-\gamma)  \bfI_k \end{bmatrix}\bfy - \begin{bmatrix}
 \beta_1 \bfe_1 \\ \bf0
 \end{bmatrix}}^2 + \frac{\lambda^2\gamma}{2} \norm[2]{\bfy}^2 + \frac{\lambda^2 (1-\gamma)}{2} \bfy\t \bfV_k\t \bfQ_2 \bfV_k \bfy.
\end{equation}
Note that the solution subspace for $\bfx$ does not depend on $\gamma$ and $\lambda$, but the solution of the projection problem depends on both $\gamma$ and $\lambda$.  Let $\bfy_k(\lambda,\gamma)$ denote the solution to~\eqref{eq:projectedproblem}, then the $k$ iterate of the mixGK method is given as
\begin{equation}
	\label{eq:iterates}
	\bfs_k (\lambda, \gamma) = \bfmu + (\gamma\bfQ_1 + (1-\gamma) \bfQ_2) \bfV_k \bfy_k (\lambda,\gamma).
\end{equation}

In Section~\ref{sub:param} we describe some techniques for selecting $\lambda$ and $\gamma$ at each iteration, but first we provide a theoretical result.  We show that for fixed regularization parameter $\lambda$ and fixed mixing parameter $\gamma$, the proposed mixGK method converges in exact arithmetic to the desired regularized solution.
\begin{theorem}
\label{thm:convergence}
Assume $\lambda> 0$ and $0<\gamma\leq 1$. Let $\bfy_k(\lambda, \gamma)$ be the exact solution to projected problem (\ref{eq:projectedproblem}). Then the kth iterate of the mixGK approach, written as 
\begin{equation}
    \bfs_k = {\bfmu} + \bfQ\bfV_k\bfy_k(\lambda, \gamma)
\end{equation}
 converges to the MAP estimate given by
 \begin{equation}
     \bfs_{\rm MAP} = {\bfmu} + \bfQ(\bfA\t\bfR^{-1}\bfA\bfQ + \lambda^2\bfI_n)^{-1}\bfA\t\bfR^{-1}\bfb.
 \end{equation}
 \end{theorem}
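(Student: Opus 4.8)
The plan is to recognize the $k$-th iterate as the minimizer of the strictly convex quadratic
$f(\bfx)=\tfrac12\norm[\bfR^{-1}]{\bfA\bfQ\bfx-\bfb}^2+\tfrac{\lambda^2}{2}\norm[\bfQ]{\bfx}^2$
over the growing subspace $\calR(\bfV_k)$, and to deduce exact convergence once this subspace fills $\bbR^n$. First I would compute $\nabla f(\bfx)=\bfQ\bigl(\bfA\t\bfR^{-1}(\bfA\bfQ\bfx-\bfb)+\lambda^2\bfx\bigr)$; since $\bfQ\succ0$ (which follows from $0<\gamma\le1$, $\bfQ_1\succ0$, $\bfQ_2\succeq0$) and $\lambda>0$, the leading $\bfQ$ cancels and the unique critical point $\bfx^*$ satisfies $(\bfA\t\bfR^{-1}\bfA\bfQ+\lambda^2\bfI_n)\bfx^*=\bfA\t\bfR^{-1}\bfb$. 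Substituting $\bfx^*$ into $\bfs=\bfmu+\bfQ\bfx$ reproduces the stated formula for $\bfs_{\rm MAP}$.

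Next I would confirm that the projected iterate~\eqref{eq:iterates} minimizes the same $f$ over $\calR(\bfV_k)$. The reduction of~\eqref{prob:projected} to~\eqref{eq:projectedproblem} substitutes $\bfx=\bfV_k\bfy$ and factors out the orthonormal matrix $[\widetilde{\bfU}_{k+1}\ \bfY_k]$ via the QR identity~\eqref{eq:skinnyQR}; because $\bfL_{\bfR}\bfb=\widetilde{\bfU}_{k+1}\beta_1\bfe_1$ lies in its column space, this step is norm-preserving and the projected objective equals $f(\bfV_k\bfy)$ exactly. Writing $\bfH=\bfQ(\bfA\t\bfR^{-1}\bfA\bfQ+\lambda^2\bfI_n)$ for the symmetric positive definite Hessian, so that $\nabla f(\bfx)=\bfH(\bfx-\bfx^*)$, the Galerkin condition $\bfV_k\t\nabla f(\bfV_k\bfy_k)=\bfzero$ reads $\bfV_k\t\bfH(\bfV_k\bfy_k-\bfx^*)=\bfzero$; thus $\bfV_k\bfy_k$ is the $\bfH$-orthogonal projection of $\bfx^*$ onto $\calR(\bfV_k)$.

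It then remains to show that $\calR(\bfV_k)$ exhausts $\bbR^n$. The relations~\eqref{e_vk}--\eqref{e_uk} identify $\calR(\bfV_k)$ with the Krylov subspace $\calK_k(\bfA\t\bfR^{-1}\bfA\bfQ_1,\,\bfA\t\bfR^{-1}\bfb)$, while the orthogonality condition $\bfV_k\t\bfQ_1\bfV_k=\bfI_k$ from~\eqref{eq:orthog} forces $\bfv_1,\dots,\bfv_k$ to be linearly independent. In exact arithmetic these subspaces are nested and strictly increasing, so absent early termination the process reaches $k=n$ with $\bfV_n$ square and $\calR(\bfV_n)=\bbR^n$; the $\bfH$-projection then acts as the identity, giving $\bfV_n\bfy_n=\bfx^*$ and hence $\bfs_n=\bfmu+\bfQ\bfx^*=\bfs_{\rm MAP}$.

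The hard part will be the case of early termination, where $\alpha_{k+1}=0$ or $\beta_{k+1}=0$ at some $k^\star<n$ and the process halts with a proper invariant subspace $\calR(\bfV_{k^\star})$ of $\bfA\t\bfR^{-1}\bfA\bfQ_1$. For a single-prior problem this is a happy breakdown in which the target already lies in the subspace, but here $\bfx^*$ is governed by $\bfA\t\bfR^{-1}\bfA\bfQ=\gamma\,\bfA\t\bfR^{-1}\bfA\bfQ_1+(1-\gamma)\,\bfA\t\bfR^{-1}\bfA\bfQ_2$, and $\calR(\bfV_{k^\star})$ need not be invariant under the $\bfQ_2$ term. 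The delicate step is to verify that breakdown of the $\bfQ_1$-bidiagonalization nonetheless forces $\bfx^*\in\calR(\bfV_{k^\star})$ --- equivalently, that the $\bfQ_2$-contribution to the normal-equation residual already lies in the generated subspace. In the generic no-breakdown regime, however, the exhaustion argument above already delivers exact convergence within $n$ iterations.
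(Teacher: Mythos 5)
Your proposal is correct, and it proves the theorem by a genuinely different mechanism than the paper. The paper's proof (Appendix~\ref{sec:appendix}) is a direct algebraic verification at $k=n$: starting from the closed form \eqref{eq:proj-solution} for $\bfy_k$, it invokes the identity $\bfD_k(\gamma)\t\bfD_k(\gamma)=(\bfL_{\bfR}\bfA\bfQ\bfV_k)\t\bfL_{\bfR}\bfA\bfQ\bfV_k$ (established in the computation \eqref{eq:traceDC} of Appendix~\ref{sec:appendix2}) together with the invertibility of the square matrix $\bfV_n$ (which follows from $\bfV_n\t\bfQ_1\bfV_n=\bfI_n$) to collapse the projected normal equations into the full ones, so that $\bfs_n=\bfs_{\rm MAP}$ by explicit cancellation of $\bfV_n$. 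You instead argue variationally: you verify (as the paper does implicitly via the QR identity \eqref{eq:skinnyQR}) that \eqref{eq:projectedproblem} is the exact restriction of the full strictly convex quadratic to $\calR(\bfV_k)$, identify the $k$th iterate as the $\bfH$-orthogonal projection of $\bfx^\ast$ onto $\calR(\bfV_k)$ with $\bfH=\bfQ\bfA\t\bfR^{-1}\bfA\bfQ+\lambda^2\bfQ\succ 0$, and conclude at $k=n$ because the projection onto all of $\bbR^n$ is the identity. Both arguments rest on the same two pillars (norm preservation through the orthonormal factor, and $\bfQ_1$-orthonormality hence invertibility of $\bfV_n$), but yours buys more: it exhibits each iterate as the best $\bfH$-norm approximation to $\bfx^\ast$ from $\calR(\bfV_k)$, hence monotone error decay in that norm, and it isolates exactly where the hypotheses $\lambda>0$ and $\gamma>0$ enter (positive definiteness of $\bfQ$ and thus of $\bfH$); the paper's computation, in exchange, is reused verbatim to prove Lemma~\ref{lemma:res_proj}, so it does double duty. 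Your closing caveat about early breakdown is a fair observation rather than a defect relative to the paper: the paper's proof also tacitly assumes the mixGK process runs to $k=n$ without breakdown (it needs $\bfV_n$ square and invertible), and your point that a breakdown of the $\bfQ_1$-driven bidiagonalization need not be ``happy'' for the mixed operator --- $\calR(\bfV_{k^\star})$ is invariant under $\bfA\t\bfR^{-1}\bfA\bfQ_1$ but generally not under $\bfA\t\bfR^{-1}\bfA\bfQ_2$, so $\bfx^\ast$ can lie outside it --- identifies a genuine limitation of the theorem as stated that the paper leaves unaddressed.
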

 \begin{proof}
    The proof is provided in Appendix \ref{sec:appendix}.
 \end{proof}

\subsection{Regularization parameter selection methods}
\label{sub:param}
In this section, we describe two extensions of existing regularization parameter selection methods that can be used for selecting $\gamma$ and $\lambda$ at each iteration of the mixGK hybrid method. Notice that the solution at the $k$-th iteration can be written as
\begin{equation}
	\label{eq:regsoln}
	\bfs_k (\lambda, \gamma) = \bfmu + (\gamma\bfQ_1 + (1-\gamma) \bfQ_2) \bfV_k \bfy_k (\lambda,\gamma),
\end{equation}
where 
\begin{equation}
\label{eq:proj-solution}
    \begin{array}{rcl}
        \bfy_k (\lambda,\gamma) & = & \left(\bfD_k(\gamma)\t \bfD_k(\gamma) + \lambda^2 \gamma \bfI_k + \lambda^2(1-\gamma)\bfV_k\t\bfQ_2\bfV_k \right)^{-1} \bfD_k(\gamma)\t \begin{bmatrix} \beta_1\bfe_1 \\ \textbf{0} \end{bmatrix} \\
        & =& \bfC_k(\gamma, \lambda) \begin{bmatrix} \beta_1\bfe_1 \\ \textbf{0} \end{bmatrix}
    \end{array}
\end{equation}  with 
\begin{align}
\label{eq:D}
    \bfD_k(\gamma) & = \begin{bmatrix}
	    \bfI_{k+1}& \widetilde{\bfU}_{k+1}\t\bfL_{\bfR}\bfA \bfQ_2 \bfV_k \\ \textbf{0} & \bfR_k \end{bmatrix} \begin{bmatrix}\gamma\bfB_k \\ (1-\gamma)  \bfI_k \end{bmatrix}  =  \begin{bmatrix}
	        \gamma\bfB_{k} + (1-\gamma)\widetilde{\bfU}_{k+1}\t\bfL_{\bfR}\bfA \bfQ_2 \bfV_k \\
	        (1-\gamma) \bfR_{k}
	    \end{bmatrix}\\
	    \bfC_k(\gamma, \lambda) & =  \left(\bfD_k(\gamma)\t \bfD_k(\gamma) + \lambda^2\gamma\bfI_k + \lambda^2(1-\gamma)\bfV_k\t\bfQ_2\bfV_k \right)^{-1} \bfD_k(\gamma)\t.
	\end{align}
As with regularization parameter selection methods for standard hybrid methods, there is not one method that will work for all problems, so it is advised to try various approaches in practice.

In order to provide a comparison, we provide ``optimal'' parameters which are computed as
\begin{equation}\label{eq:reg_opt}
	(\gamma_{\rm opt}, \lambda_{\rm opt}) = \argmin_{0< \gamma \le 1,\,\lambda} \norm[2]{\bfs_k(\gamma, \lambda) - \bfs_{\rm true}}^2,
\end{equation}
where $\bfs_{\rm true}$ is the true solution (that is not available in practice).

\paragraph{Unbiased predictive risk estimation (UPRE).} We can select parameters $\gamma, \lambda$ such that \begin{equation}
    (\gamma_{\rm u}^{\rm proj},\lambda_{\rm u}^{\rm proj}) = \argmin\limits_{0 <\gamma\le 1,\,\lambda} \mathcal{U}_{\rm  proj}(\gamma,\lambda) = \dfrac{1}{2k+1}\|\bfr_k^{\rm  proj}(\gamma,\lambda)\|^2_2 + \dfrac{2\sigma^2}{2k+1 } {\rm tr}(\bfD_k(\gamma)\bfC_k(\gamma,\lambda)) -\sigma^2
    \label{eq:projUpre}
\end{equation}  where $\sigma^2$ is noise level, and \begin{equation}
    \label{eq:proj_res}
    \bfr_k^{\rm proj}(\gamma,\lambda) =  \bfD_k(\gamma) \bfy_k(\gamma,\lambda) -  \begin{bmatrix} \beta_1\bfe_1 \\ \textbf{0}\end{bmatrix}
\end{equation} 
 and 
\begin{equation}
\label{eq:trace_res}
\begin{array}{rcl}
    {\rm tr}(\bfD_k(\gamma)\bfC_k(\gamma,\lambda)) & = & {\rm tr}(\bfC_k(\gamma,\lambda)\bfD(\gamma)) \\
    & = & {\rm tr}(\left((\bfD_k(\gamma))\t \bfD_k(\gamma) + \lambda^2 \gamma \bfI_k + \lambda^2(1-\gamma)\bfV_k\t\bfQ_2\bfV_k \right)^{-1} (\bfD_k(\gamma))\t \bfD_k(\gamma)). \\
    \end{array}
\end{equation} When the noise level $\sigma^2$ is not provided, a noise level estimation algorithm (e.g., based on a wavelet decomposition of the observation) can be utilized \cite{donoho1995denoising}. 

\paragraph{Generalized cross validation (GCV).} Without a priori knowledge of the noise level, another option is to use an extension of the GCV method \cite{golub1979generalized,hansen2010discrete}. The basic idea is to select parameters, \begin{equation}\label{eq:proj_gcv}
	(\gamma_{\rm g}^{\rm proj}, \lambda_{\rm g}^{\rm proj}) = \argmin\limits_{0 < \gamma \le 1,\,\lambda} {\cal G}_{\rm proj}(\gamma,\lambda) = \dfrac{\|\bfr_k^{\rm proj}(\gamma,\lambda)\|_2^2}{({\rm tr}(\bfI_{2k+1} - \bfD_k(\gamma)\bfC_k(\gamma,\lambda)))^2}
\end{equation} where $\bfr_k^{\rm proj}(\gamma,\lambda)$, $\bfD_k(\gamma)$, and $\bfC_k(\gamma,\lambda)$ are same as \eqref{eq:projUpre}. \\

Notice that $\bfr_k^{\rm proj}$ and ${\rm tr}(\bfD_k(\gamma)\bfC_k(\gamma,\lambda))$ are functions of $k$ in both the GCV and UPRE functions. 
In order to prove convergence of the parameters chosen by UPRE and GCV, we begin with a lemma that shows convergence of the projected residual $\bfr_k^{\rm proj}$ and trace term ${\rm tr}(\bfD_k(\gamma)\bfC_k(\gamma,\lambda))$ to their full counterparts.
\begin{lemma}
\label{lemma:res_proj}
With \eqref{eq:proj_res}, \eqref{eq:trace_res}, if $k \rightarrow n$, then \begin{equation}
    \begin{array}{rcl}
    \bfr_k^{\rm proj}& \rightarrow & \bfr^{\rm full}(\gamma,\lambda)\\
    {\rm tr}(\bfD_k(\gamma)\bfC_k(\gamma,\lambda)) & \rightarrow & {\rm tr}(A(\gamma,\lambda))
    \end{array}
\end{equation} where \begin{equation}
    \begin{array}{rcl}
    \bfr^{\rm full}(\gamma,\lambda) & =& \bfL_{\bfR}\bfA\bfQ\bfx(\gamma,\lambda)-\bfL_{\bfR}\bfb \\
    A(\gamma,\lambda) & = & \bfL_{\bfR}\bfA\bfQ(\bfQ\t\bfA\t\bfR^{-1}\bfA\bfQ + \lambda^2\bfQ)^{-1}\bfQ\t\bfA\t\bfL_{\bfR}\t.
    \end{array}
\end{equation} and $\bfr^{\rm full}(\gamma,\lambda)=\bfr_n^{\rm proj}(\gamma,\lambda)$.
\end{lemma}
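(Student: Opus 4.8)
The plan is to reduce the lemma to one structural fact: for $k=n$ the search space $\calR(\bfV_n)$ is all of $\bbR^n$, so the projected problem~\eqref{eq:projectedproblem} coincides with the full regularized problem~\eqref{eq:transformed}, and the stated ``$k\to n$'' limit is really the terminal equality at $k=n$. First I would note that in exact arithmetic the mixGK recurrence runs to completion, so $\bfV_n\in\bbR^{n\times n}$ is square; by the orthogonality relation~\eqref{eq:orthog}, $\bfV_n\t\bfQ_1\bfV_n=\bfI_n$, which forces $\bfV_n$ to be invertible (with left inverse $\bfV_n\t\bfQ_1$) and hence $\calR(\bfV_n)=\bbR^n$.

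Next I would expose the projected operators in terms of the full ones. Setting $\bfW_k=\begin{bmatrix}\widetilde{\bfU}_{k+1}&\bfY_k\end{bmatrix}$, which has $2$-orthonormal columns, the factorization~\eqref{eq:skinnyQR} and the definition~\eqref{eq:D} give $\bfW_k\bfD_k=\bfL_{\bfR}\bfA\bfQ\bfV_k$, i.e.\ $\bfD_k=\bfW_k\t\bfL_{\bfR}\bfA\bfQ\bfV_k$. Squaring and using $\bfW_k\t\bfW_k=\bfI_{2k+1}$ yields $\bfD_k\t\bfD_k=\bfV_k\t\bfM\bfV_k$ with $\bfM=\bfQ\t\bfA\t\bfR^{-1}\bfA\bfQ$, while the penalty block collapses via $\bfV_k\t\bfQ_1\bfV_k=\bfI_k$ to $\lambda^2\gamma\bfI_k+\lambda^2(1-\gamma)\bfV_k\t\bfQ_2\bfV_k=\lambda^2\bfV_k\t\bfQ\bfV_k$. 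Hence the inner matrix of $\bfC_k$ is exactly the Galerkin compression $\bfV_k\t(\bfM+\lambda^2\bfQ)\bfV_k$, and $\bfr_k^{\rm proj}$ from~\eqref{eq:proj_res} equals $\bfW_k\t(\bfL_{\bfR}\bfA\bfQ\bfV_k\bfy_k-\bfL_{\bfR}\bfb)$ once one checks $\bfW_k\t\widetilde{\bfU}_{k+1}=\begin{bmatrix}\bfI_{k+1}\\\textbf{0}\end{bmatrix}$ and $\bfL_{\bfR}\bfb=\widetilde{\bfU}_{k+1}\beta_1\bfe_1$.

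With these in hand the two limits follow by specializing to $k=n$. For the residual, the projected normal equations read $\bfV_n\t(\bfM+\lambda^2\bfQ)\bfV_n\bfy_n=\bfV_n\t\bfQ\t\bfA\t\bfR^{-1}\bfb$ (using $\bfW_n\begin{bmatrix}\beta_1\bfe_1\\\textbf{0}\end{bmatrix}=\bfL_{\bfR}\bfb$ for the right-hand side); left-multiplying by $\bfV_n^{-\top}$ and cancelling the invertible $\bfV_n$ gives the full normal equations, so $\bfV_n\bfy_n=\bfx(\gamma,\lambda)$. Because $\bfW_n$ is an isometry, $\norm[2]{\bfr_n^{\rm proj}}=\norm[2]{\bfL_{\bfR}\bfA\bfQ\bfV_n\bfy_n-\bfL_{\bfR}\bfb}=\norm[2]{\bfr^{\rm full}}$, which is the sense in which $\bfr_n^{\rm proj}=\bfr^{\rm full}$. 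For the trace, invertibility of $\bfV_n$ lets me factor $(\bfV_n\t(\bfM+\lambda^2\bfQ)\bfV_n)^{-1}=\bfV_n^{-1}(\bfM+\lambda^2\bfQ)^{-1}\bfV_n^{-\top}$; substituting into ${\rm tr}(\bfD_n\bfC_n)={\rm tr}\big((\bfV_n\t(\bfM+\lambda^2\bfQ)\bfV_n)^{-1}\bfV_n\t\bfM\bfV_n\big)$ and cycling the trace cancels the $\bfV_n$ factors, leaving ${\rm tr}((\bfM+\lambda^2\bfQ)^{-1}\bfM)={\rm tr}(A(\gamma,\lambda))$.

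The main obstacle I anticipate is the honest treatment of the residual identification: $\bfr_n^{\rm proj}\in\bbR^{2n+1}$ and $\bfr^{\rm full}\in\bbR^m$ are not literally the same vector, so I must phrase the equality through the orthonormal map $\bfW_n$ (equivalently as equality of $2$-norms) and verify that $\bfr^{\rm full}$ genuinely lies in $\calR(\bfW_n)$ rather than only projecting onto it --- which holds precisely because $\bfV_n\bfy_n=\bfx(\gamma,\lambda)$ exactly. A secondary point is early termination of the recurrence: I would restrict to the generic non-breakdown case so that $\calR(\bfV_n)=\bbR^n$, remarking that an early happy breakdown only helps, since then $\calR(\bfV_k)$ already contains $\bfx(\gamma,\lambda)$ and the same compression identities apply, so the UPRE and GCV functionals in~\eqref{eq:projUpre} and~\eqref{eq:proj_gcv} --- which depend only on $\norm[2]{\bfr_k^{\rm proj}}^2$ and ${\rm tr}(\bfD_k\bfC_k)$ --- attain their full values.
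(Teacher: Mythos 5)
Your proposal is correct and takes essentially the same route as the paper's Appendix B proof: both arguments reduce the projected residual and the trace term to the full-space quantities restricted to $\calR(\bfV_k)$ via the identity $\bfD_k(\gamma)\t\bfD_k(\gamma)=(\bfL_{\bfR}\bfA\bfQ\bfV_k)\t\bfL_{\bfR}\bfA\bfQ\bfV_k$ together with $\lambda^2\gamma\bfI_k+\lambda^2(1-\gamma)\bfV_k\t\bfQ_2\bfV_k=\lambda^2\bfV_k\t\bfQ\bfV_k$, and then at $k=n$ cancel the invertible $\bfV_n$ (invertible since $\bfV_n\t\bfQ_1\bfV_n=\bfI_n$) using the cyclic property of the trace. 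The only differences are presentational: where the paper establishes the key identity by an explicit four-term block expansion involving $\bfR_k\t\bfR_k$, you get it in one line from the isometry $\bfW_k=\begin{bmatrix}\widetilde{\bfU}_{k+1}&\bfY_k\end{bmatrix}$ with $\bfW_k\bfD_k(\gamma)=\bfL_{\bfR}\bfA\bfQ\bfV_k$, and your careful reading of the dimension mismatch in the statement $\bfr_n^{\rm proj}=\bfr^{\rm full}$ (as an equality through $\bfW_n$, i.e.\ of norms) makes explicit a point the paper leaves implicit.
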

\begin{proof}
 The proof is provided in Appendix \ref{sec:appendix2}.
\end{proof}

Next we provide convergence results for the UPRE and GCV selected parameters that are similar to results provided in \cite{renaut2017hybrid} but are extended to the mixed hybrid methods. In particular, we show in Theorem \ref{thm:Convergence_U_G} that the UPRE parameters
for the projected problem converge to the UPRE parameters for the full problem.  Then, we show that with an additional weighting parameter, the same result holds for GCV parameters.

\begin{theorem}
    \label{thm:Convergence_U_G}
    From \eqref{eq:transformed}, the UPRE for the full problem is given \begin{equation}
    (\gamma_{\rm u}^{\rm full},\lambda_{\rm u}^{\rm full}) = \argmin\limits_{0 < \gamma \le 1,\,\lambda}\mathcal{U}_{\rm full}(\gamma,\lambda) = \dfrac{1}{m}\|\bfr^{\rm full}(\gamma,\lambda)\|^2_2+\dfrac{2\sigma^2}{m}{\rm tr}(A(\gamma,\lambda))-\sigma^2.
    \label{eq:fullUpre}
\end{equation}    Then, \begin{equation}
    (\gamma_{\rm u}^{\rm proj}, \lambda_{\rm u}^{\rm proj}) \rightarrow (\gamma_{\rm u}^{\rm full}, \lambda_{\rm u}^{\rm full})
\end{equation} as $k\rightarrow n$.
\end{theorem}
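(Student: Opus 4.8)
The plan is to reduce the claimed convergence of minimizers to the pointwise convergence of the underlying functionals that Lemma~\ref{lemma:res_proj} already supplies, and then to exploit the finite termination of the mixGK process at $k=n$. The first observation is that the location of the minimizer of the UPRE functional~\eqref{eq:projUpre} is insensitive to the two pieces that do not shape its dependence on the fit: adding the constant $-\sigma^2$ and multiplying the remaining terms by the fixed positive factor $\tfrac{1}{2k+1}$ leave the $\argmin$ unchanged. The same is true of~\eqref{eq:fullUpre} with the factor $\tfrac1m$. Consequently,
\[
(\gamma_{\rm u}^{\rm proj},\lambda_{\rm u}^{\rm proj}) = \argmin_{0<\gamma\le 1,\,\lambda}\ \Phi_k(\gamma,\lambda),\qquad \Phi_k(\gamma,\lambda) := \|\bfr_k^{\rm proj}(\gamma,\lambda)\|_2^2 + 2\sigma^2\,{\rm tr}\big(\bfD_k(\gamma)\bfC_k(\gamma,\lambda)\big),
\]
\[
(\gamma_{\rm u}^{\rm full},\lambda_{\rm u}^{\rm full}) = \argmin_{0<\gamma\le 1,\,\lambda}\ \Phi(\gamma,\lambda),\qquad \Phi(\gamma,\lambda) := \|\bfr^{\rm full}(\gamma,\lambda)\|_2^2 + 2\sigma^2\,{\rm tr}\big(A(\gamma,\lambda)\big),
\]
so it suffices to compare $\Phi_k$ with $\Phi$ rather than the normalized functionals themselves.

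Next I would invoke Lemma~\ref{lemma:res_proj}, which gives, for each fixed $(\gamma,\lambda)$ in the admissible set, the pointwise convergences $\bfr_k^{\rm proj}\to\bfr^{\rm full}$ and ${\rm tr}(\bfD_k(\gamma)\bfC_k(\gamma,\lambda))\to{\rm tr}(A(\gamma,\lambda))$ as $k\to n$, hence $\Phi_k\to\Phi$ pointwise. The decisive point is that the lemma also records the exact identity $\bfr^{\rm full}(\gamma,\lambda)=\bfr_n^{\rm proj}(\gamma,\lambda)$: in exact arithmetic the mixGK iteration terminates after at most $n$ steps, at which stage the columns of $\bfV_n$ span $\R^n$ and the change of variables $\bfx=\bfV_n\bfy$ is a bijection, so the projected problem~\eqref{eq:projectedproblem} at $k=n$ coincides with the full problem~\eqref{eq:transformed}. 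To upgrade the trace convergence to an exact equality at $k=n$, I would use cyclic invariance of the trace to write both ${\rm tr}(\bfD_n\bfC_n)$ and ${\rm tr}(A)$ as traces of $n\times n$ influence operators on the solution space, and then observe that the former is the matrix representation of the latter in the complete $\bfV_n$-basis; since trace is invariant under this change of basis, the two agree. Together with $\bfr_n^{\rm proj}=\bfr^{\rm full}$ this yields $\Phi_n\equiv\Phi$ identically on $\{0<\gamma\le 1,\ \lambda\}$.

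Finally, since $\Phi_n=\Phi$ as functions on the common domain, their sets of minimizers coincide, giving $(\gamma_{\rm u}^{\rm proj},\lambda_{\rm u}^{\rm proj})=(\gamma_{\rm u}^{\rm full},\lambda_{\rm u}^{\rm full})$ at $k=n$; as $k$ is integer-valued and bounded by $n$, this exact attainment at the terminal index is precisely the asserted convergence. I expect the only genuine obstacle to be the trace step: Lemma~\ref{lemma:res_proj} delivers the residual identity at $k=n$ but states the trace merely as a limit, so the argument must independently certify the exact equality of the trace terms at completion, which is where the basis-completeness and trace-invariance reasoning above is essential. I would also note, for completeness, that if one wished to treat the strictly pre-terminal regime $k<n$ as a true limiting process, pointwise convergence of $\Phi_k$ alone would not force convergence of $\argmin$; one would additionally need equicoercivity of $\Phi_k$ in $\lambda$ and uniform convergence on compact $(\gamma,\lambda)$-sets (an epi-convergence argument). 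The finite-termination property makes this additional machinery unnecessary here.
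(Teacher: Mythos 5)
Your proof is correct and takes essentially the same route as the paper: both reduce the theorem to Lemma~\ref{lemma:res_proj} together with the exact coincidence of the projected and full quantities at $k=n$, and your basis-completeness/trace-invariance step is precisely how the paper's appendix gets ${\rm tr}(\bfD_n(\gamma)\bfC_n(\gamma,\lambda))={\rm tr}(A(\gamma,\lambda))$, via the invertibility of $\bfV_n$ from $\bfV_n\t\bfQ_1\bfV_n=\bfI_n$. If anything you are more careful than the paper's two-line proof, which silently passes from pointwise convergence of $\mathcal{U}_{\rm proj}$ to convergence of its minimizers and never addresses the differing normalizations $1/(2k+1)$ versus $1/m$; your observations that positive scaling and the additive $-\sigma^2$ leave the $\argmin$ unchanged, and that finite termination at $k=n$ substitutes for the epi-convergence machinery a genuine limit would require, close exactly the gaps the paper leaves implicit.
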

\begin{proof}
    Since $\|\bfr^{\rm proj}_k\|_2^2 \rightarrow \|\bfr^{\rm full}\|^2_2$ and ${\rm tr}(\bfD_k(\gamma)\bfC_k(\gamma,\lambda)) \rightarrow {\rm tr}(A(\gamma,\lambda))$ as shown in Lemma \ref{lemma:res_proj}, \begin{equation*}
        \argmin_{0 < \gamma \le 1,\,\lambda}{\cal U}_{\rm proj}(\gamma,\lambda) \rightarrow \argmin_{0 < \gamma \le 1,\,\lambda}{\cal U}_{\rm full}(\gamma,\lambda)
    \end{equation*} as $k\rightarrow n$ for the same noise level $\sigma^2$.
\end{proof} 

For the full problem, the GCV parameters are given by \begin{equation}
    (\gamma_{\rm g}^{\rm full},\lambda_{\rm g}^{\rm full}) = \argmin\limits_{0 < \gamma \le 1,\,\lambda} {\cal G}_{\rm full}(\gamma,\lambda) = \dfrac{\|\bfr^{\rm full}(\gamma,\lambda)\|_2^2}{({\rm tr}(\bfI_m - A(\gamma,\lambda)))^2}.
    \label{eq:fullGCV}
\end{equation} In contrast with UPRE, $(\gamma_{\rm g}^{\rm proj}, \lambda_{\rm g}^{\rm proj})$ does not minimize \eqref{eq:fullGCV} as $k\rightarrow n$ because the trace of $\bfI_{2k+1} -\bfD_k(\gamma)\bfC_k(\gamma,\lambda)$ does not converge to the trace of $\bfI_m-A(\gamma,\lambda)$. To compensate for this, we include an additional parameter $\omega$ in \eqref{eq:proj_gcv} as, \begin{equation}
    \label{eq:proj_wgcv}
    (\lambda^{\rm proj}_{\rm w},\gamma^{\rm proj}_{\rm w})=\argmin_{0 < \gamma \le 1,\,\lambda}{\cal W}(\gamma,\lambda)_{\rm proj}=\frac{\|\bfr_k^{\rm proj}(\gamma,\lambda)\|^2_2}{({\rm tr} (\bfI_{2k+1}-\omega\bfD_k(\gamma)\bfC_k(\gamma,\lambda)))^2}    
\end{equation} where $\omega = \frac{2k+1}{m}$. Since \begin{equation}
    ({\rm tr} (\bfI_{2k+1}-\omega\bfD_k(\gamma)\bfC_k(\gamma,\lambda)))^2 = \frac{2k+1}{m}({\rm tr} (\bfI_{m}-\bfD_k(\gamma)\bfC_k(\gamma,\lambda)))^2,
\end{equation} ${\cal G}_{\rm full}(\gamma,\lambda)$ is minimized by $(\lambda^{\rm proj}_{\rm w},\gamma^{\rm proj}_{\rm w})$ as $k\rightarrow n$. Similar modified GCV functions were considered  in \cite{chung2008weighted, renaut2017hybrid}.

\section{Numerical results} \label{sec:numerics}
 In this section, we provide various numerical results from tomography to investigate our proposed hybrid method based on the mixGK process, which we denote as `mixHyBR'. First, in Section \ref{sec:numericEx1} we investigate data-driven mixed Gaussian priors where we assume that training data are available, and we compare various hybrid methods to existing shrinkage algorithms. Then, we consider a seismic crosswell tomography reconstruction problem in Section \ref{sec:numericEx2}, where we show that using a combination of covariance kernels can result in improved reconstructions. For the stopping criteria for mixHyBR, we use a combination of approaches described in \cite{chung2008weighted, chung2015hybrid, chung2017generalized}, where the iterative process is terminated if either of the following three criteria is satisfied: (i) a maximum number of iterations is reached, (ii) depending on the chosen regularization parameter selection method,
the function (\ref{eq:projUpre}) for UPRE, 
(\ref{eq:proj_gcv}) for GCV, or (\ref{eq:proj_wgcv}) for WGCV attains a minimum or flattens out,
 and (iii) tolerances on residuals are achieved.

 \subsection{Spherical tomography example} \label{sec:numericEx1}
 For our first example, we use a spherical means tomography reconstruction problem from the IRTools toolbox \cite{IRtools,hansen2018air}.  Such models are often used in imaging problems from photoacoustic or optoacoustic imaging, which is a non-ionizing biomedical imaging modality.
 The true image $\bfs_{\rm true}$ consists of $128 \times 128$ pixels, and the forward model matrix $\bfA$ represents a ray-tracing operation along semi-circle curves where the angle of centers range from $0^\circ$ to $90^\circ$ at steps of $(90/64)^\circ$. The number of circles at each angle is $90$. Thus the dimension of $\bfA$ is $5,760\times16,384$ and the sinogram is $90 \times 64$. The simulated observed sinogram was obtained as in (\ref{eq:problem}), where we have included $3\%$ additive Gaussian white noise, i.e., $\frac{\norm{\bfepsilon}}{\norm{\bfA\bfs_{\rm true}}} = 0.03$. Other conditions are chosen as the default settings provided by the toolbox; see \cite{IRtools} for details. In the left panel of Figure \ref{fig:SphericalForwardModel}, we provide the true image along with some of the integration curves.  
 
 \begin{figure}[b!]
   \begin{center}
 \begin{tabular}{ccc}
   \raisebox{-.5\height}{\includegraphics[width=.5\textwidth]{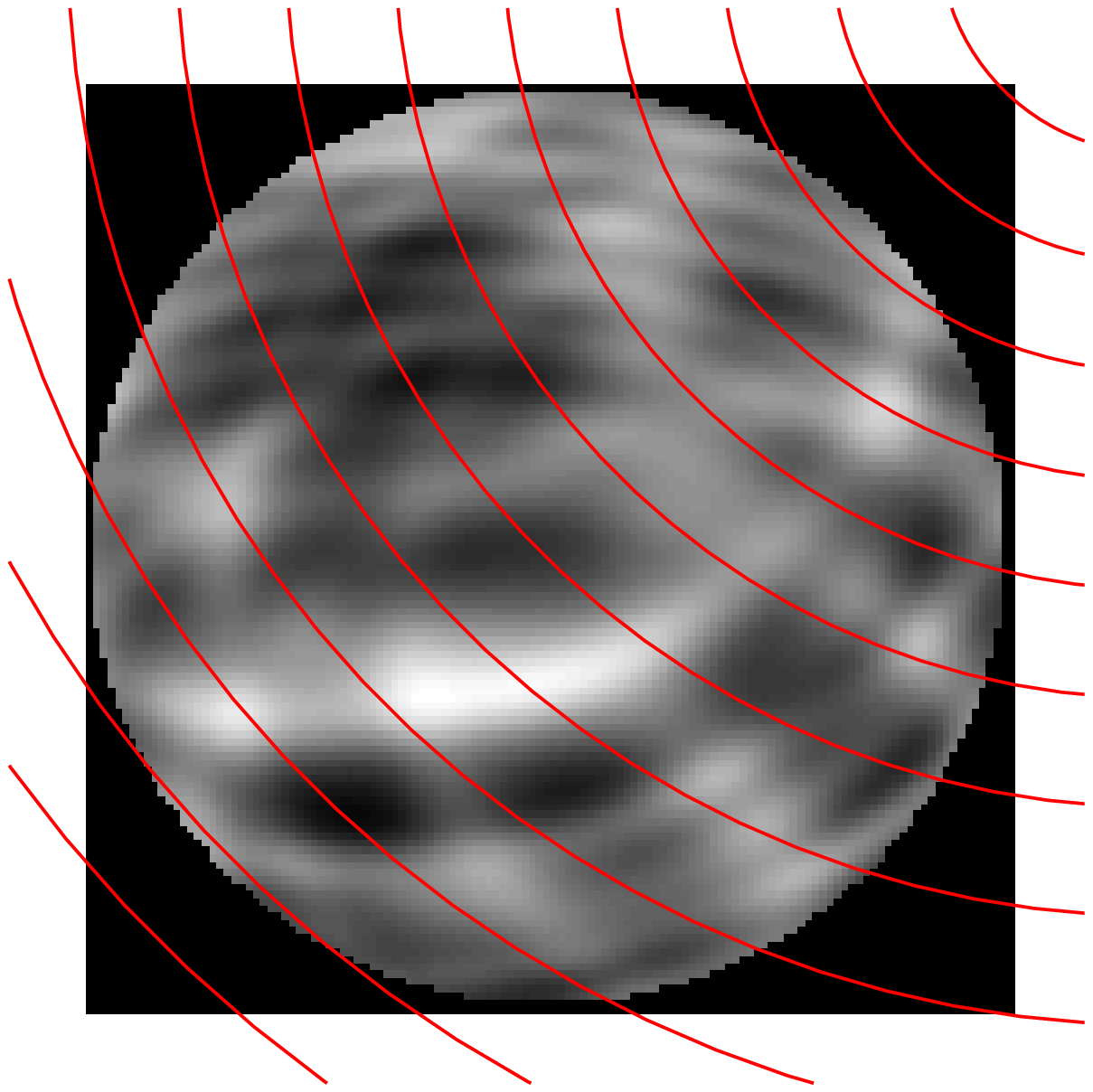}}&
   \hspace{-0.16in}\includegraphics[width=.2\textwidth]{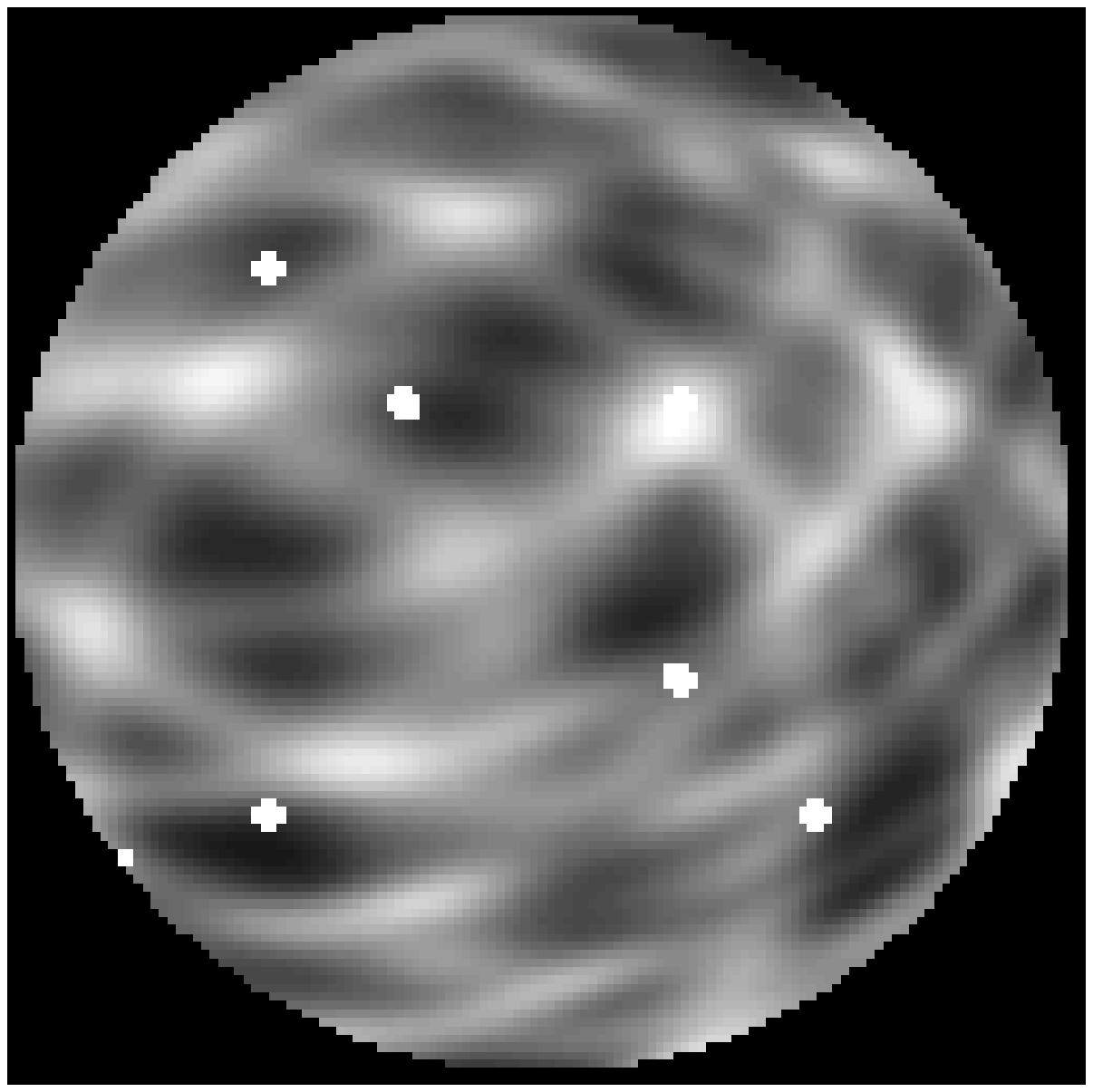}
   & \hspace{0.27in}\includegraphics[width=.2\textwidth]{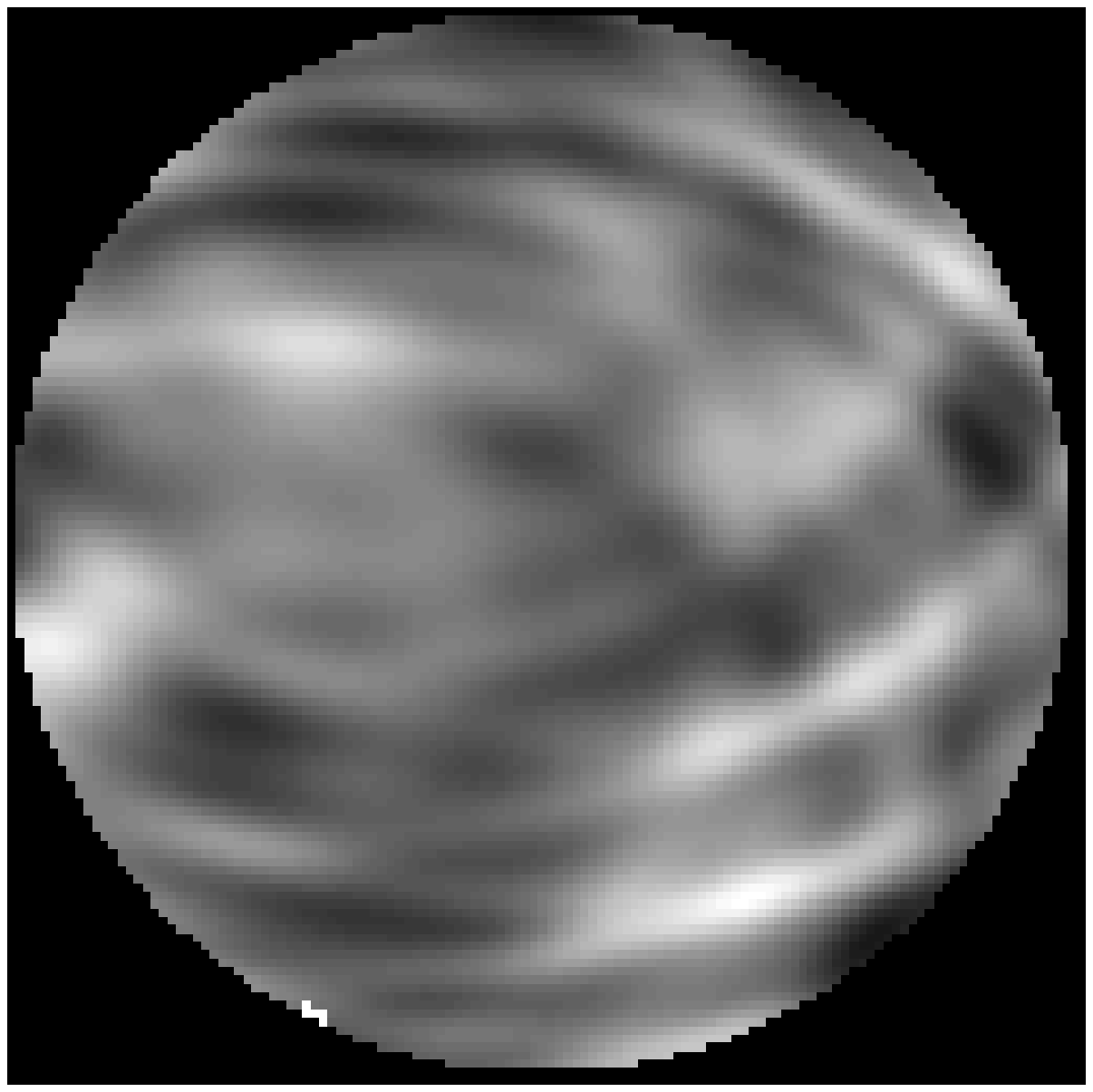}\\[-16.5ex]
   & \hspace{-0.16in}\includegraphics[width=.2\textwidth]{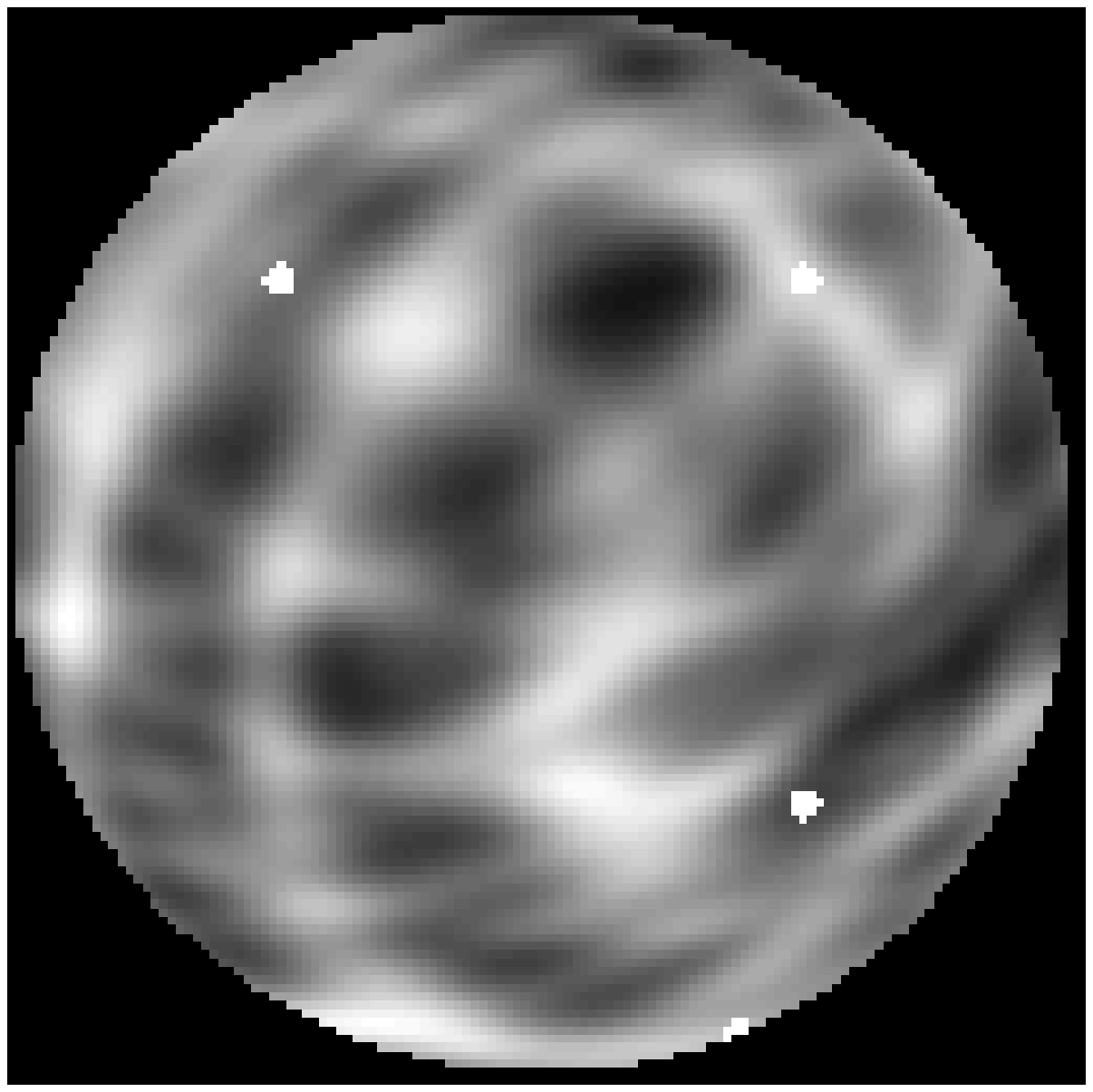}&
   \hspace{0.27in}\includegraphics[width=.2\textwidth]{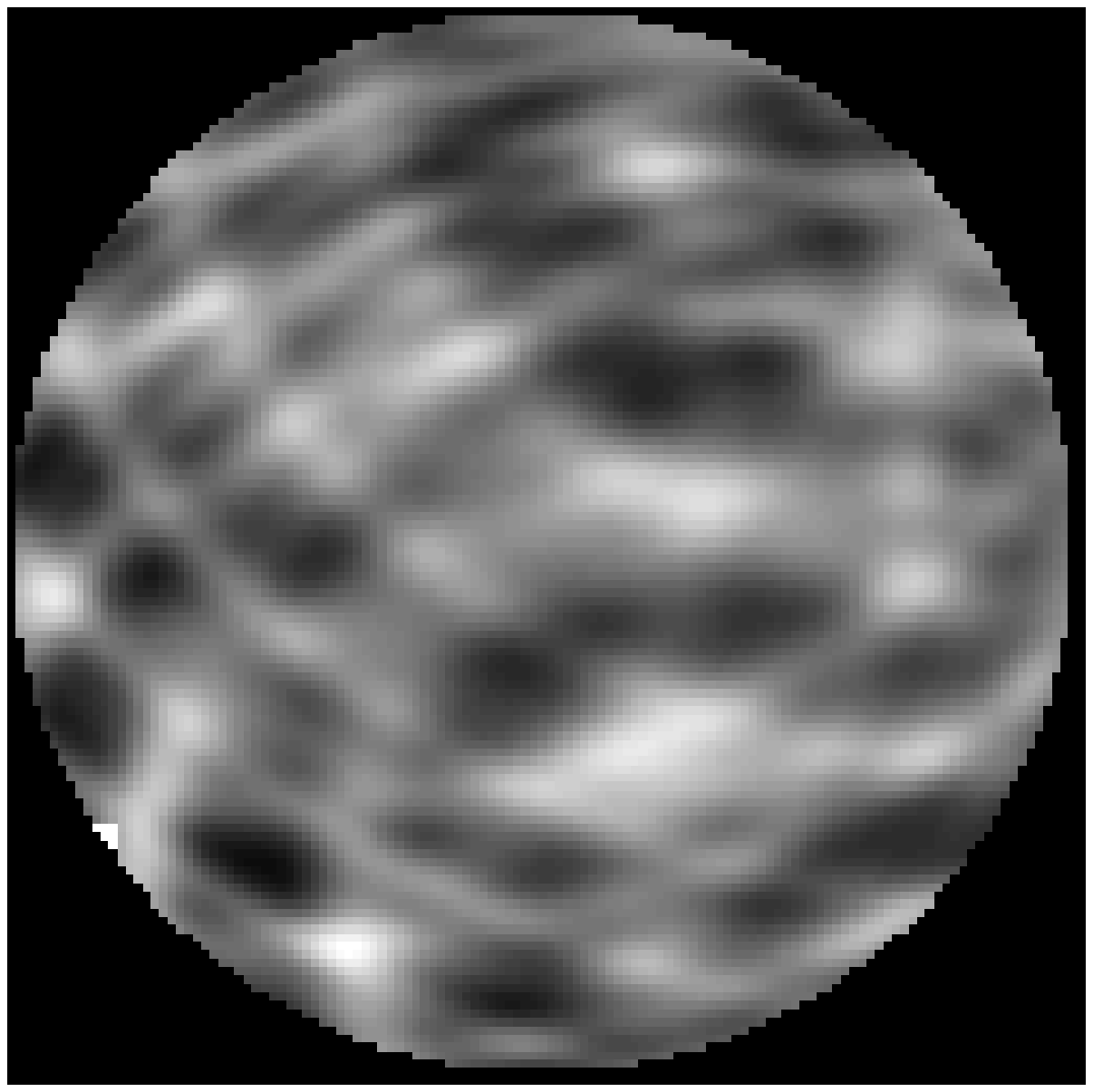}
 \end{tabular}
   \caption{Spherical tomography example.  On the left, the true image is provided, along with a few of the integration curves whose centers are located at $45^\circ$. Four sample images from the training dataset are provided on the right.}\label{fig:SphericalForwardModel}
   \end{center}
 \end{figure}
 
  Next, we assume that we have a dataset of training images for this problem consisting of $49$ images; four of the training images are provided in the right panel of Figure \ref{fig:SphericalForwardModel}.  All of the images contain a circular mask to denote the region of interest or region of visibility.  The inner regions of the images are generated using a linear combination of sine-squared functions, where the coefficients are random numbers uniformly distributed between $0.5$ and $1$, and the random numbers in sine-squared functions are uniformly distributed between $0$ and $128$.  Furthermore, each image is contaminated by at most $8$ ``freckles'' generated as white disks, where $5$ of them have radius $3$ and the rest have radius $4$. The freckles are randomly placed, where the origins of the freckles are uniformly distributed. Notice that the freckles do no appear in the true image.

 Given the training dataset $\{\bfs^{(1)}, \ldots, \bfs^{(49)}\}$, we first compute the (vectorized) mean image $\bar{\bfs}$ and the sample covariance matrix $\widehat{\bfQ}$. Next, assuming that the prior covariance matrix represents a Mat\'ern kernel, we solve optimization problem \eqref{ref:RademacherEst} to obtain ``learned'' Mat\'ern parameters $\check{\nu}$ and $\check{\ell}$ and consider the covariance matrix $\bfQ_{\text{learn}} = \bfQ(\check{\nu},\check{\ell}).$

\begin{figure}[b!]
    \centering
    \begin{tabular}{cc}
    \includegraphics[width=0.45\textwidth]{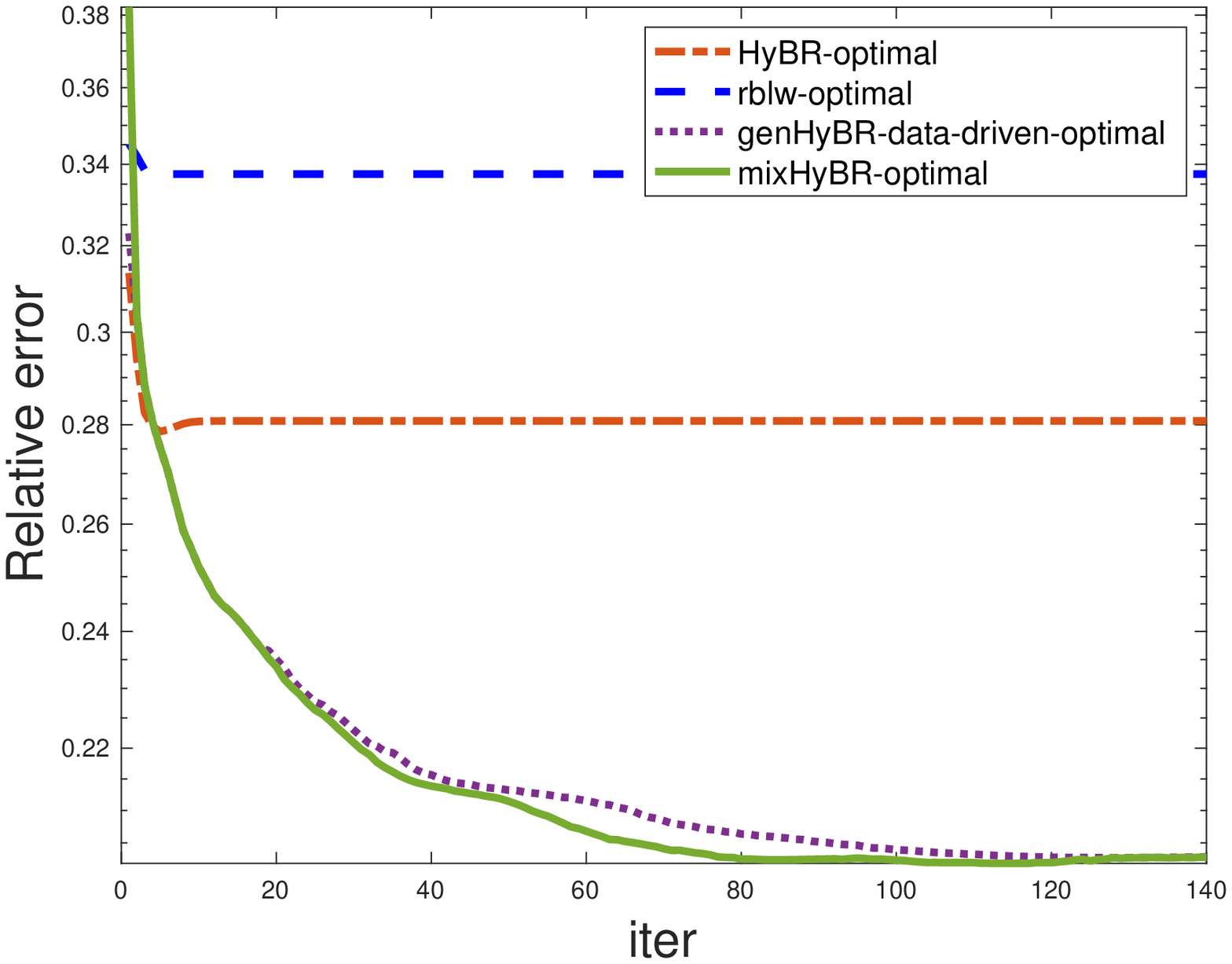} &
      \includegraphics[width=0.45\textwidth]{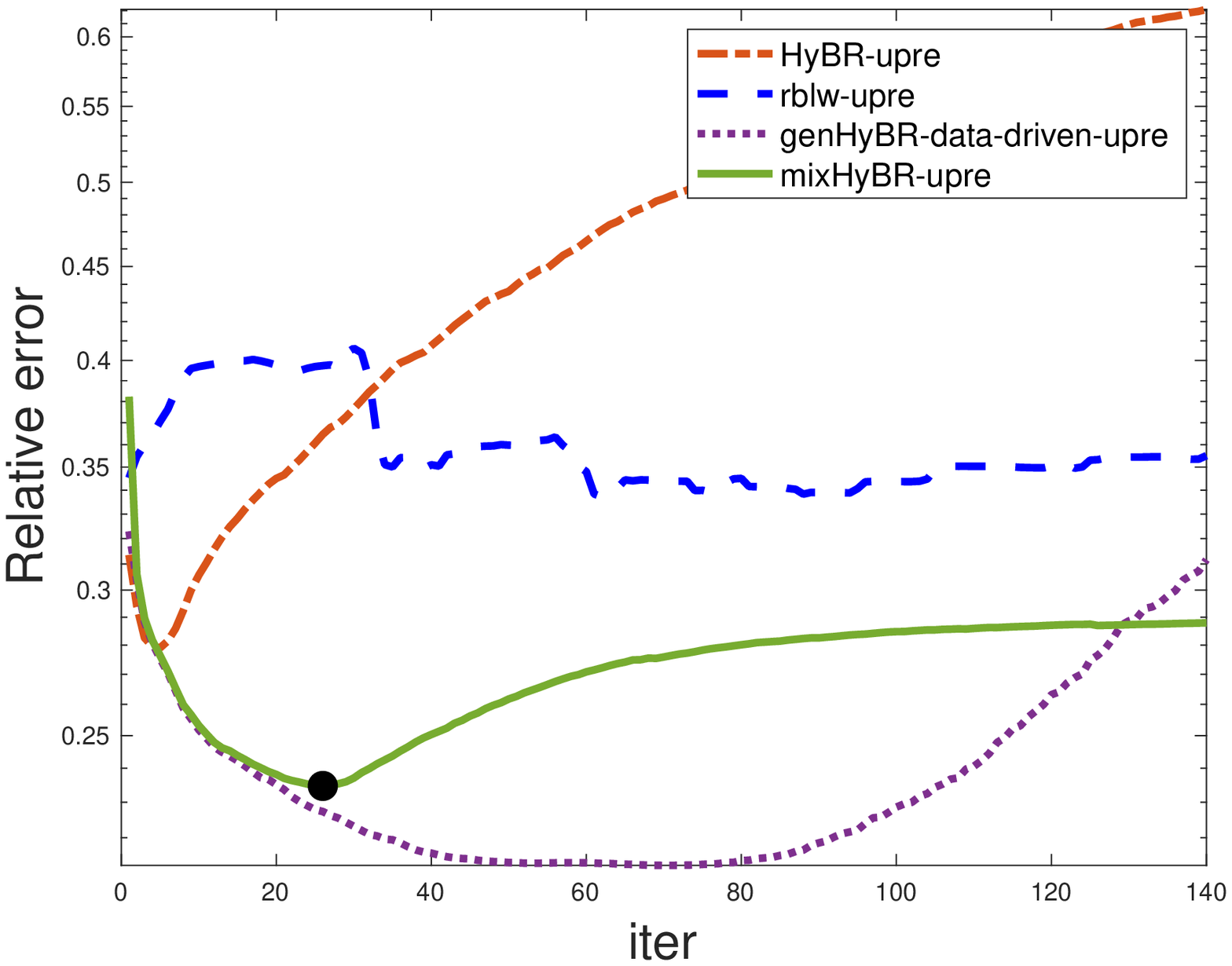} \\
      \includegraphics[width=0.45\textwidth]{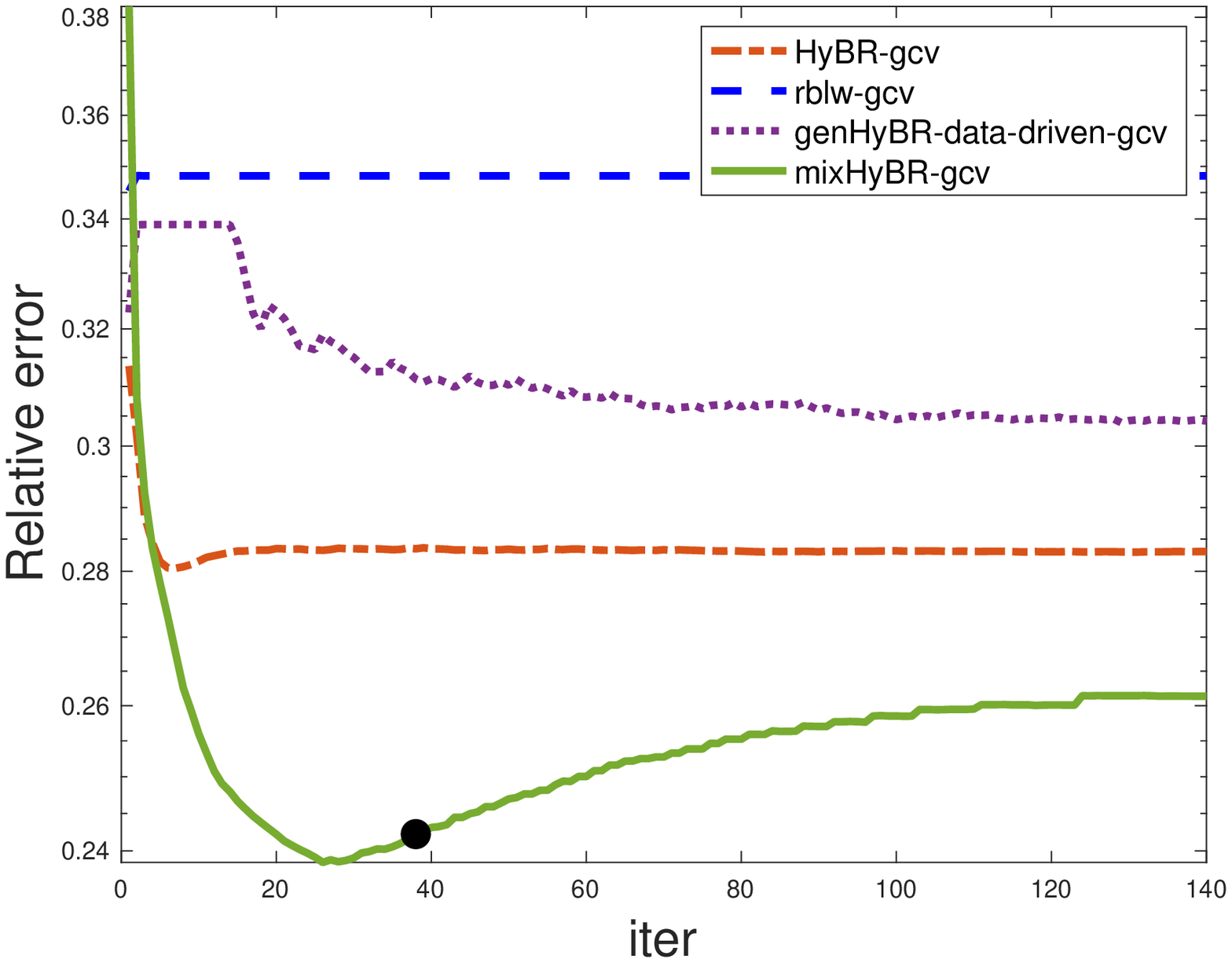}  &
      \includegraphics[width=0.45\textwidth]{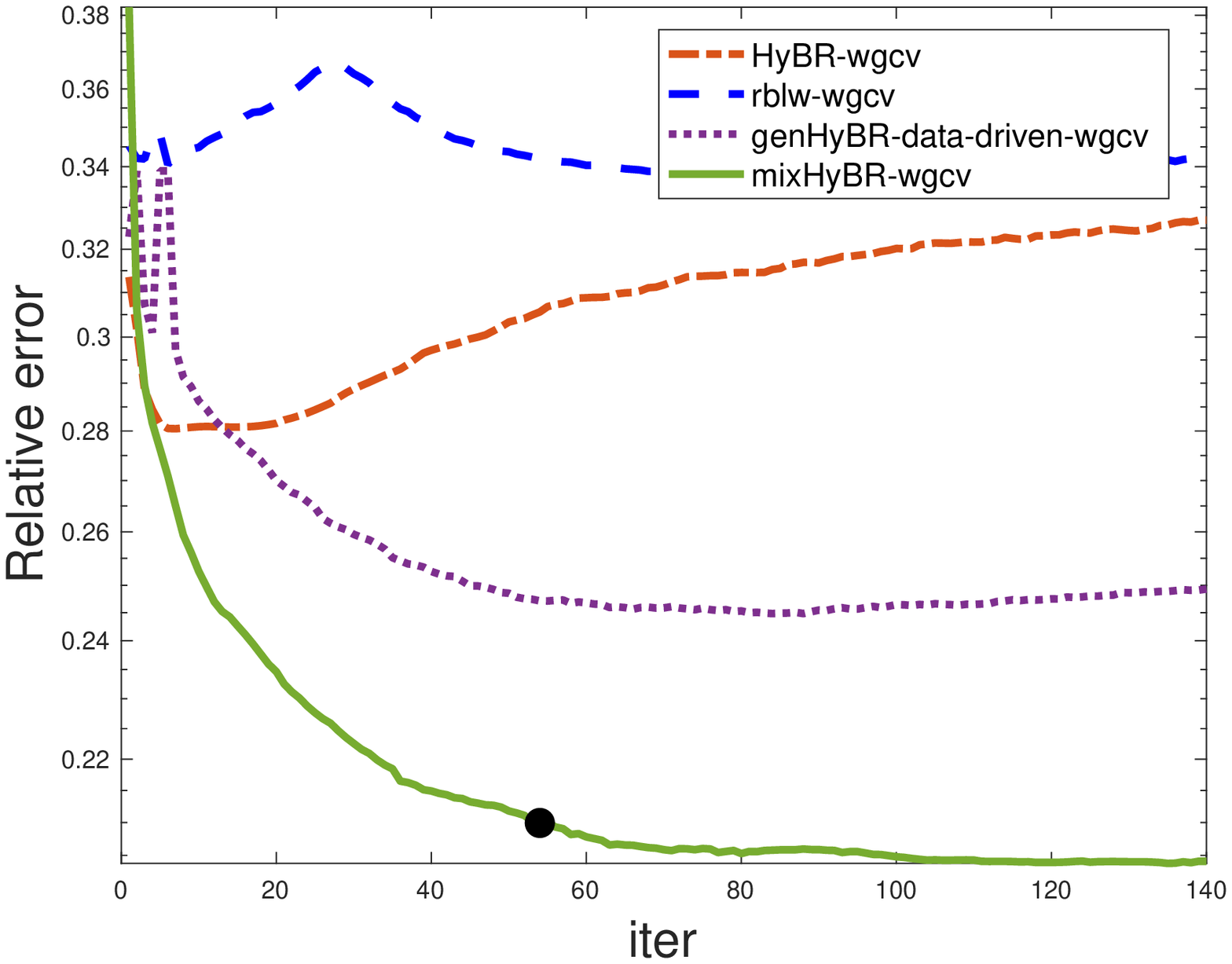}\\
      \end{tabular}
      \caption{Comparison of relative reconstruction error norms for various iterative hybrid approaches for spherical tomography reconstruction. The top left plot corresponds to using the optimal regularization parameters.  Other plots correspond to different methods to choose the regularization parameters, including UPRE, GCV, and WGCV.}
      \label{fig:other_com}
  \end{figure}
 
 We consider four hybrid iterative reconstruction methods, all with initial vector $\bar{\bfs}$. Given the training data, we run the genHyBR algorithm with $\bfQ = \bfQ_{\text{learn}}$ which we denote as `genHyBR-data-driven'.  We also provide results for `mixHyBR' where $\bfQ = \gamma\bfQ_{\text{learn}} + (1-\gamma)\widehat{\bfQ}$ where $\gamma$ and $\lambda$ are selected during the iterative process.
For comparison, we provide results for genHyBR with $\bfQ = \gamma\bfI + (1-\gamma)\widehat{\bfQ}$ where $\gamma$ was pre-selected using the Rao-Blackwell Ledoit and Wolf estimator (rblw) \cite{LW04, chen2009shrinkage,chen2010shrinkage}. We also provide results for HyBR where $\bfQ = \bfI$, but remark that this approach only uses the training data for the initial (sample mean) vector.  Note that for all considered methods, the regularization parameter $\lambda$ must be selected, and we investigate various approaches to do this.

 In Figure \ref{fig:other_com}, we provide relative reconstruction error norms computed as $\norm[2]{\bfs_k - \bfs_{\rm true}}/\norm[2]{\bfs_{\rm true}},$ where $\bfs_k$ is the reconstruction at the $k$th iteration. Each plot corresponds to a different method for selecting the regularization parameters.  For comparison, we provide in the top left plot results corresponding to the optimal regularization parameter, although these parameters cannot be computed in practice. 
 We observe that both genHyBR-data-driven and mixHyBR result in small error norms and that even with the optimal regularization parameter $\lambda$, the rblw approach performs poorly because of the poorly-estimated mixing parameter $\gamma.$
 We remark that we also compared these results to a shrinkage algorithm based on the oracle approximating shrinkage (OAS) estimator \cite{chen2009shrinkage,chen2010shrinkage} for obtaining $\gamma$.  However, we observed very similar results as rblw, so we do not include them here. 
 
For the automatic parameter selection methods, we observe that mixHyBR reconstructions with GCV and WGCV and genHyBR-data-driven reconstructions with UPRE have the smallest relative reconstruction error norms per iteration, compared to the other methods.  Thus, we observe that including a data-driven covariance matrix, if done properly, can be beneficial. The black dots denote the (automatically-selected) stopping iteration for mixHyBR. Although one may wish to tweak the stopping criteria, all of the examples with mixHyBR resulted in a good reconstruction with the described stopping criteria. For a better comparison of the different parameter selection methods, we provide all relative reconstruction errors for mixHyBR in Figure \ref{fig:compare_com}, where it is evident that relative errors for WGCV are very close to those for the optimal regularization parameter for this example. 
 
 \begin{figure}[t!]
    \centering
  \includegraphics[height=8cm, width=.8\textwidth]{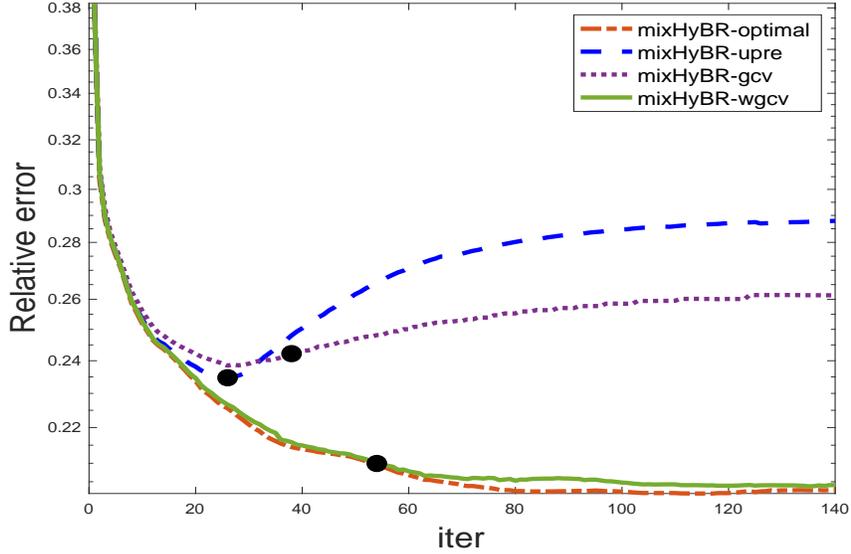}
      \caption{Relative reconstruction error norms per iteration of mixHyBR, for various regularization parameter choice methods.  Black dots denote the automatically computed stopping iteration.}
      \label{fig:compare_com}
  \end{figure}

  \begin{figure}[t!]
    \centering
      \includegraphics[width=1\textwidth]{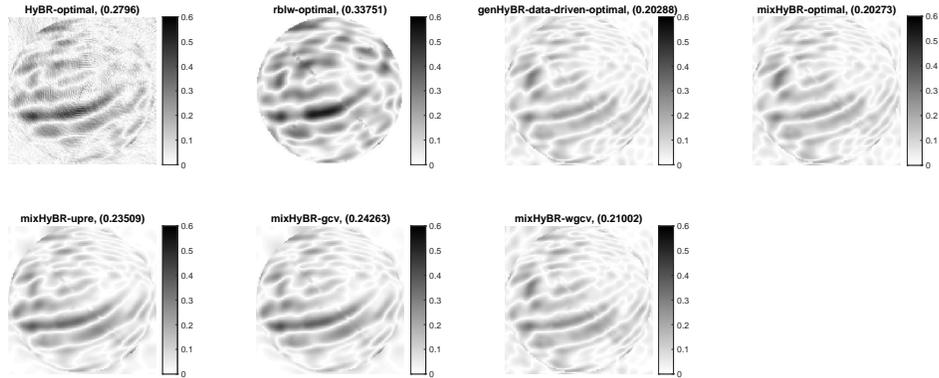} 
      \caption{Absolute error images (in inverted colormap), with relative reconstruction error norms provided in the titles.  The top row compares reconstructions using optimal regularization parameters, and the bottom row compares mixHyBR reconstructions with different parameter choice methods.}
      \label{fig:recon1}
  \end{figure}
 
Absolute error images, computed as $|\bfs_k - \bfs_\true|$, reshaped as an image, and displayed in inverted colormap, are provided in Figure \ref{fig:recon1}.  For better comparison, all error images have been put on the same scale, and dark regions corresponds to larger absolute errors.  Relative reconstruction error norms are provided in the titles.  In the top row, we compare reconstructions at iteration $140$ using the optimal regularization parameter.  Absolute error images in the bottom row correspond to mixHyBR reconstructions with automatic regularization parameter selection and correspond to the iteration determined by the stopping criteria. We notice that even with the optimal regularization parameter, the HyBR-optimal reconstruction suffers from the lack of sufficient prior information and the rblw-optimal reconstruction contains large errors due to the poor choice of $\gamma$ and disruptions due to freckles in the training data.
The mixHyBR and genHyBR-data-driven reconstructions have overall smaller absolute errors in the image.  For this example, all parameter selection methods combined with the stopping iteration performed reasonably well.

\subsection{Seismic tomography example} 
\label{sec:numericEx2}
In this experiment, we consider a linear inversion problem from crosswell tomography \cite{ambikasaran2013large}. Crosswell tomography is used to image the seismic wave speed in some region of interest, given data collected from multiple source-receiver pairs. The sources send out a seismic wave, and the receivers measure the travel time taken by the seismic wave to hit the receiver. The goal of the inverse problem is to image the slowness (reciprocal wave velocity) of the medium in the domain. We consider an example from Continuous Active Source Seismic Monitoring (CASSM) \cite{Daley2011}, where the goal is to monitor the spatial development of a small scale injection of CO$_2$ into a high quality reservoir. We consider reconstruction at a single time point and investigate the impact of including mixed Gaussian priors on the reconstruction.
 
 \begin{figure}[b!]
\begin{center}
\begin{tabular}{cc}
\includegraphics[width=0.4\textwidth]{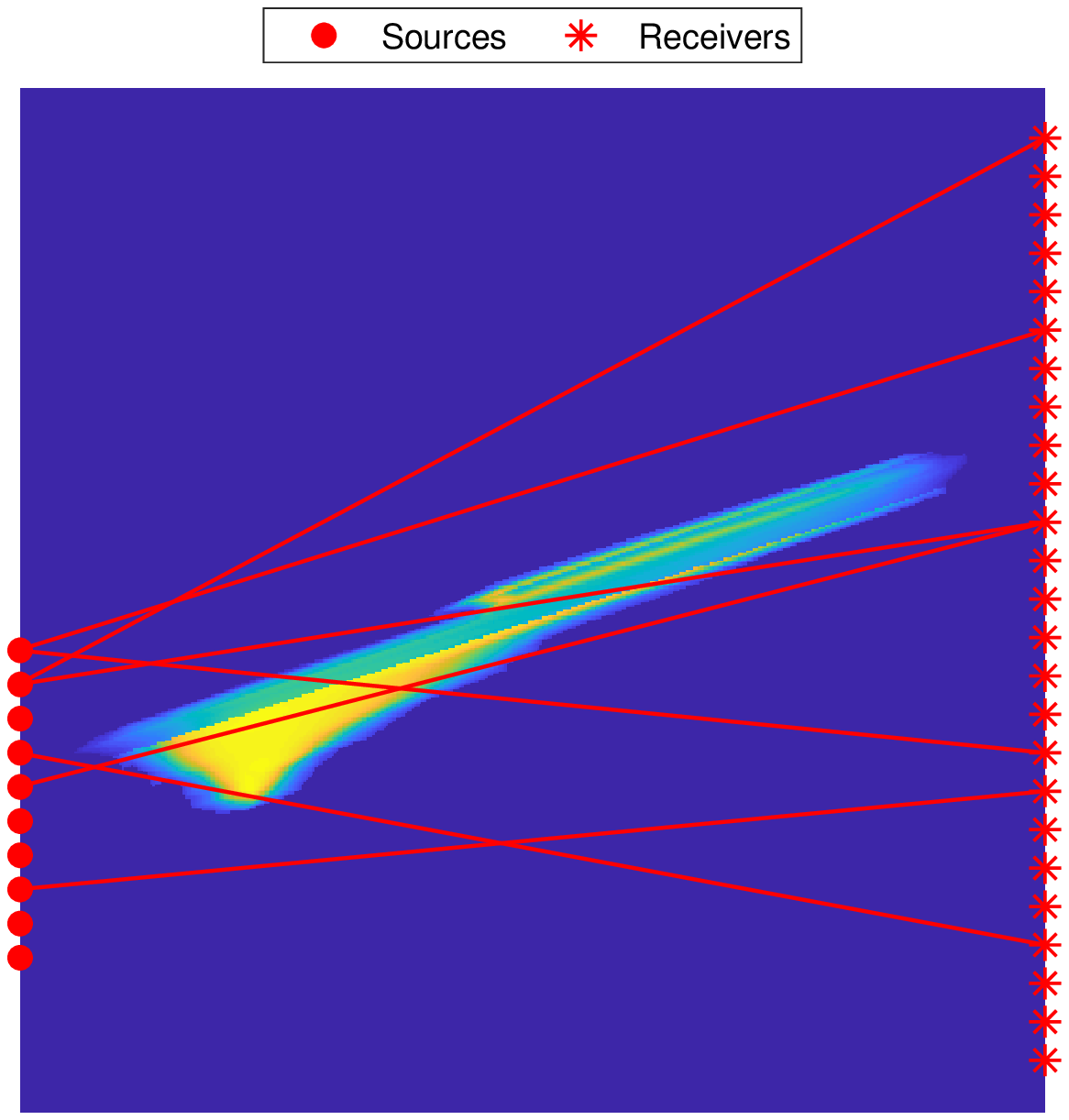} \quad & \quad 
\includegraphics[width=0.15\textwidth]{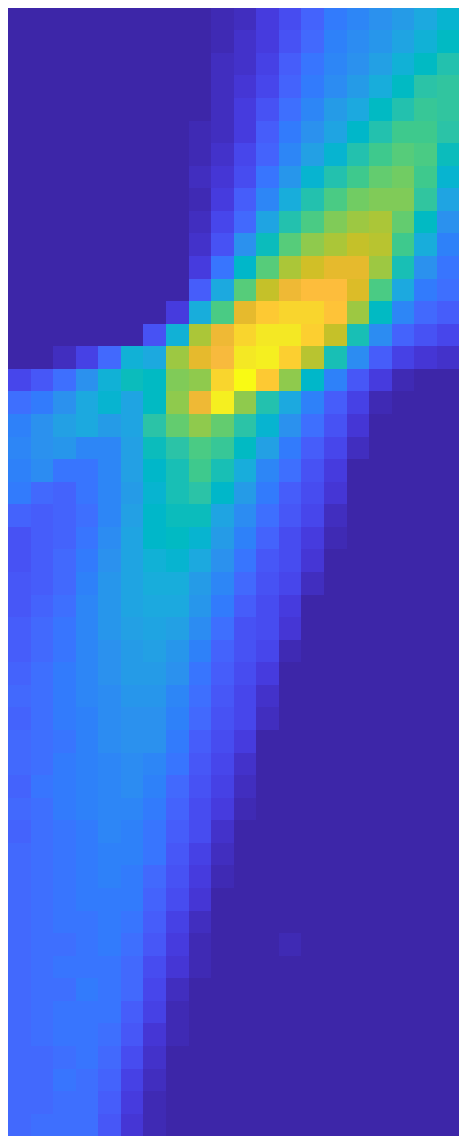}
\end{tabular}
  \end{center}
 \caption{CASSM example.  In the left panel, we  provide the true slowness field image, along with some of the locations of the sources and the detectors. Seven of the source-receiver pairs are highlighted in the figure.
 In the right panel, we provide the observations corresponding to $20$ sources and $50$ receivers.}
      \label{fig:CASSM_seismic_true}
\end{figure}

The inverse problem can be represented as (\ref{eq:problem}) where the goal is to reconstruct the slowness $\bfs \in \mathbb{R}^{n\times1}$ of the medium from the measured travel times $\bfd \in \mathbb{R}^{m\times1}$ which are assumed to be corrupted by Gaussian white noise $\bfepsilon \in \mathbb{R}^{m\times1}$.
In our problem setup, the true slowness field was discretized into $n=188,356$ cells, where the slowness within each cell is assumed to be constant. The true image (normalized between $0$ and $1$) is of size $434 \times 434$ and was obtained from \cite{cassmrawdata}.
For the observations, there were $m_s = 20$ sources and $m_r = 50$ receivers, so a total of $m = m_r m_s$ measurements.  Each row of the forward model matrix $\bfA \in \mathbb{R}^{m\times n}$ corresponds to a source-receiver pair. Since the wave travels along a straight line from source to receiver, only the cells lying on the straight line contribute to the non-zero entries. Hence, $\bfA$ is very sparse with $\mathcal{O}(\sqrt{m}n)$ non-zero entries.   The true image along with a schematic of the source-detector pairs are given in the left panel of Figure \ref{fig:CASSM_seismic_true}.  The observations, which contain $1\%$ noise, are provided in the right panel of Figure \ref{fig:CASSM_seismic_true}.

\begin{figure}[t!]
\begin{center}
 \includegraphics[height=6cm,width=.8\textwidth]{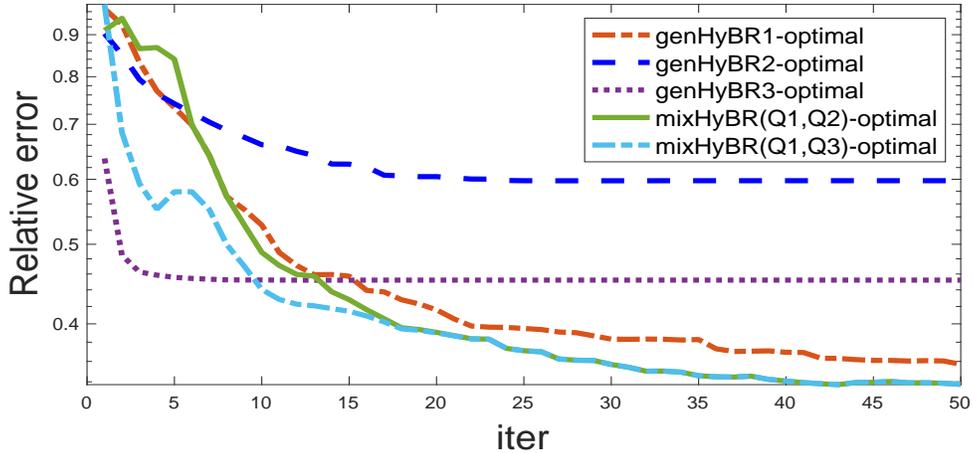}
\end{center}
 \caption{Comparison of relative reconstruction error norms for genHyBR and mixHyBR with optimal parameters $\gamma$ and $\ell$.}
      \label{fig:CASSM_opt}
\end{figure}
\begin{figure}[b!]
 \includegraphics[width=\textwidth]{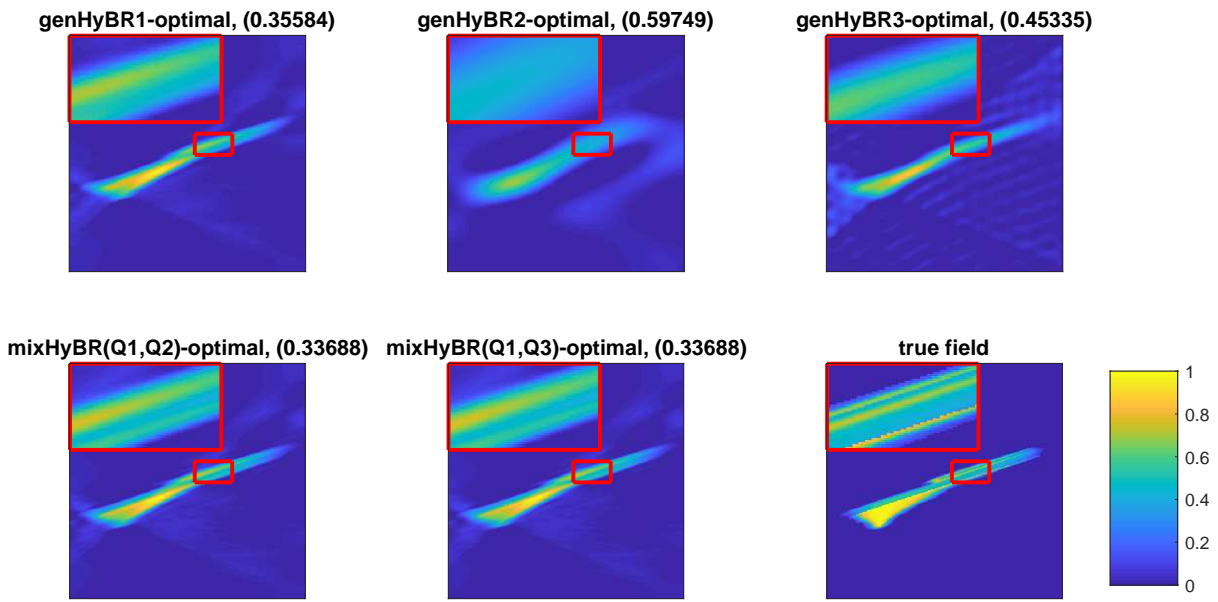}
 \caption{Reconstructions with zoomed subimages for CASSM example.  All of the reconstructions use the optimal regularization parameter and relative reconstruction errors are provided in the titles.}
      \label{fig:CASSM_seismic_recons_opt}
\end{figure}

Next we investigate the impact of different choices of $\bfQ$ on the reconstruction. First, we consider the genHyBR method with three different prior covariance matrices $\bfQ_1, \bfQ_2,$ and $\bfQ_3$ defined by a Mat\'{e}rn kernel with $\nu=0.5$ and $\ell=.25$, a rational quadratic with $\nu=2$ and $\ell=0.1$, and a sinc function with $\nu = 30\pi$, respectively. These approaches are denoted by `genHyBR1', `genHyBR2', and `genHyBR3' respectively.  Then we consider two mixHyBR approaches that include mixed Gaussian priors, where $\text{mixHyBR}(Q_1,Q_2)$ uses covariance matrix $\bfQ = \gamma\bfQ_1+(1-\gamma)\bfQ_2$ and
$\text{mixHyBR}(Q_1,Q_3)$ uses covariance matrix $\bfQ = \gamma\bfQ_1+(1-\gamma)\bfQ_3$, where the mixing parameter $\gamma$ is selected during the reconstruction process. For the optimally selected regularization parameters, we provide in Figure \ref{fig:CASSM_opt} the relative reconstruction error norms per iteration. 

We observe that if a good covariance matrix (in this case, $\bfQ_1$) is known in advance, stand-alone genHyBR can perform well and result in small relative reconstruction errors.  Otherwise, the relative reconstruction errors may remain large, and multiple solves with different covariance matrices would be needed to determine a good prior.  In this case, the mixHyBR approach can prove beneficial. The mixHyBR approaches produce reconstructions with overall smaller relative reconstruction errors than genHyBR with each covariance matrix alone. 
Image reconstructions, including a zoomed subregion, are provided in Figure \ref{fig:CASSM_seismic_recons_opt}.  Notice that the mixed Gaussian priors are better able to resolve some details of the true image.
Thus, incorporating mixed Gaussian priors can lead to improved reconstructions.

\begin{figure}[t!]
\begin{center}
 \begin{tabular}{cc} 
 \includegraphics[width=0.48\textwidth]{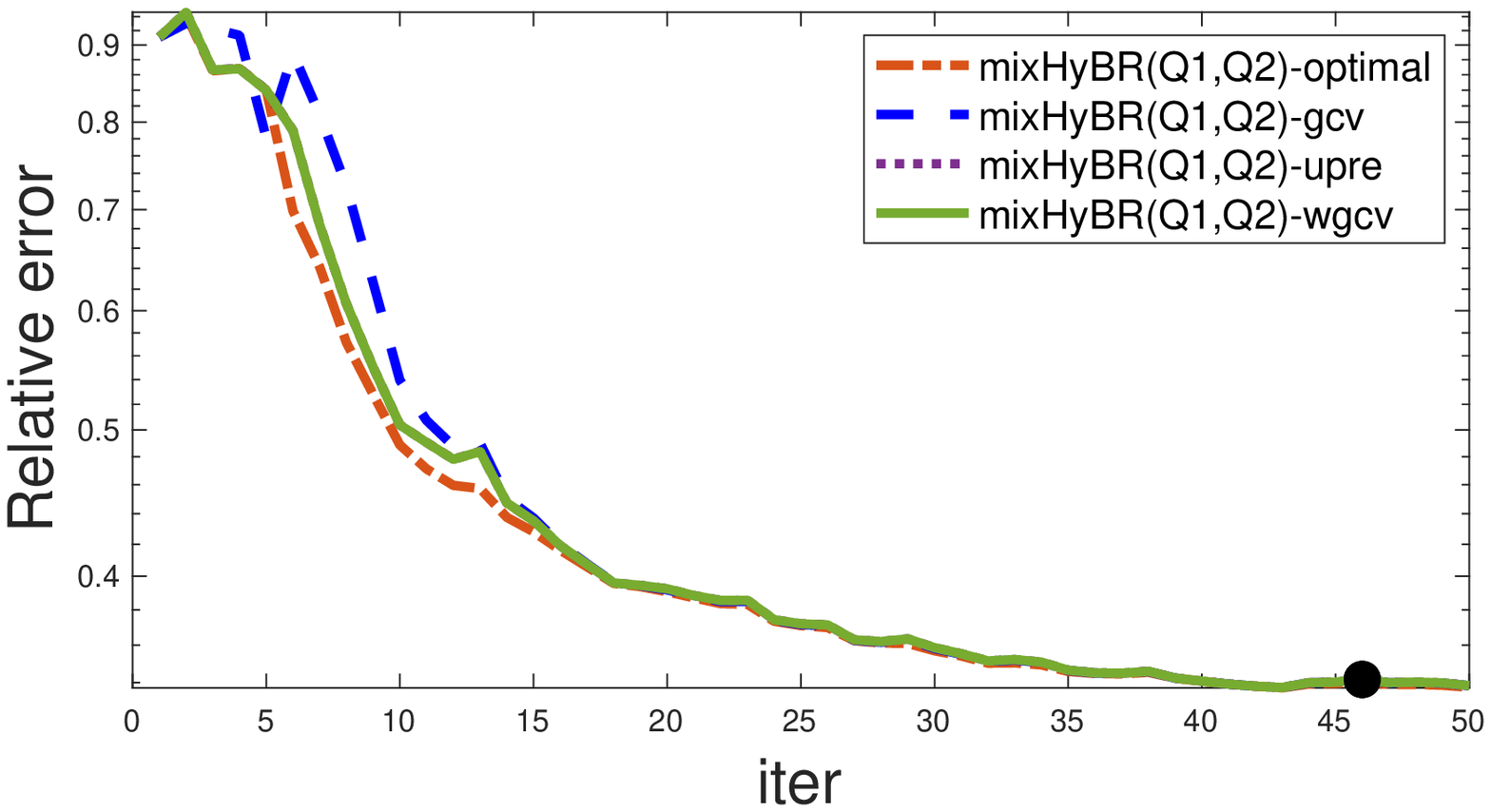}&
\includegraphics[width=0.48\textwidth]{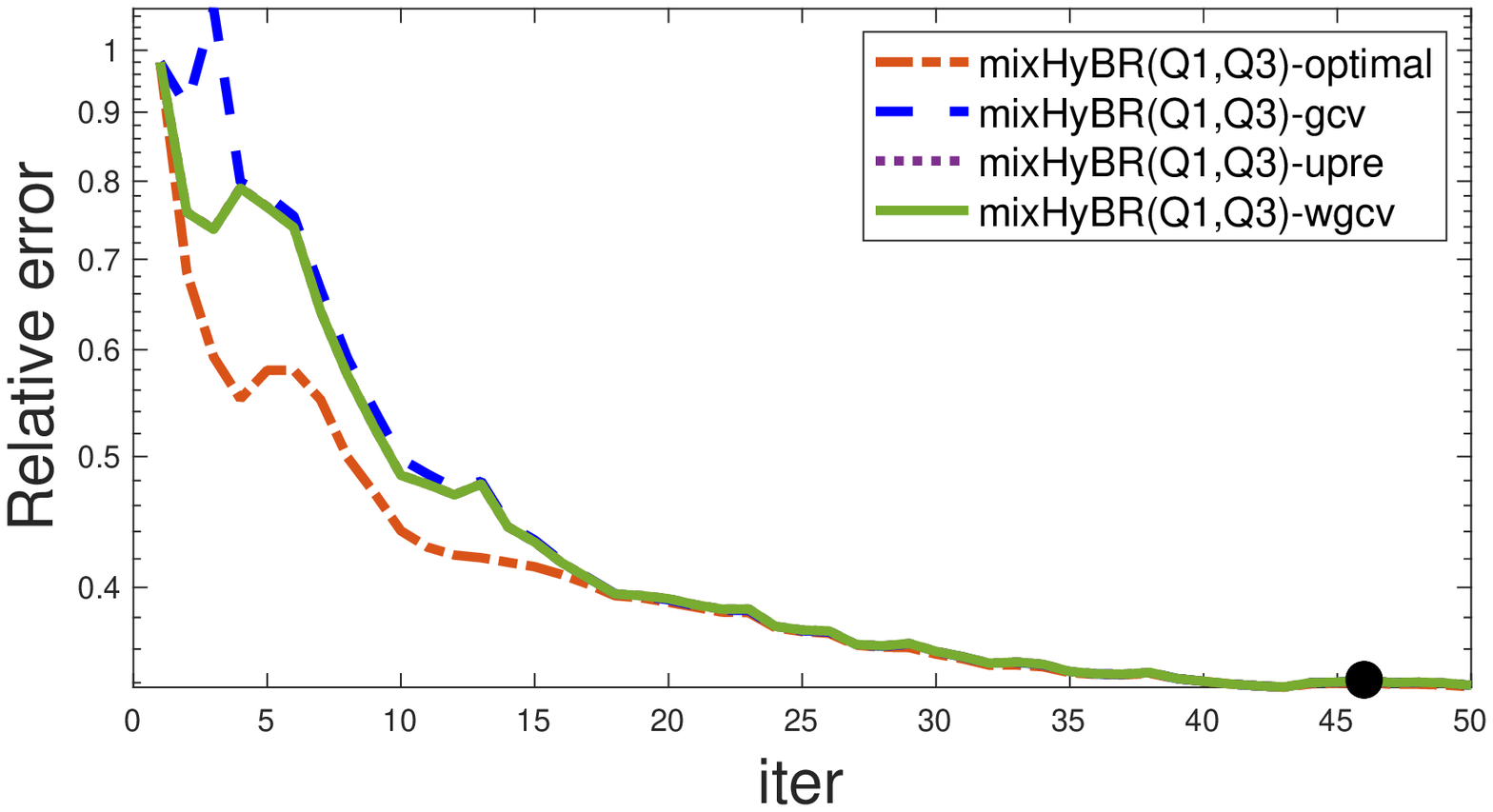}
\end{tabular}
\end{center}
 \caption{Comparison of relative reconstruction errors for $\text{mixHyBR}(Q_1,Q_2)$ (left) and $\text{mixHyBR}(Q_1,Q_3)$ (right) for different parameter choice methods. The automatically detected stopping iteration is marked with a black circle.}
      \label{fig:CASSM_seismic_true_forward_err}
\end{figure}

Next we investigate the performance of different regularization parameter selection methods within the mixHyBR methods.  Relative reconstruction errors for the GCV, WGCV, and UPRE methods with stopping iterates are provided in Figure \ref{fig:CASSM_seismic_true_forward_err}, along with results for the optimal parameters.  We used a tolerance of $10^{-6}$ for the residual errors.  We observe that all of the parameter selection methods work well for this example.

\section{Conclusions}
\label{sec:conclusions}
This paper describes a hybrid iterative projection method, dubbed mixHyBR, that is based on an extensions of the generalized Golub-Kahan bidiagonalization and that can be used for solving inverse problems (i.e., computing MAP estimates) with mixed Gaussian priors. The main advantage of this approach is that the mixing or blending parameter does not need to be known a priori, but rather can be estimated during the iterative process along with the regularization parameter. Various methods for selecting these parameters were considered and evaluated.  Furthermore, mixHyBR methods can easily incorporate data-driven priors where training data are used to define the prior covariance matrix itself (e.g., sample based priors) or to learn parameters for the covariance kernel function. Comparisons to widely-used shrinkage algorithms reveal that the mixed hybrid approaches are more robust under the presence of noise or artifacts in the data and enable greater flexibility when selecting suitable priors. Numerical results from both spherical and seismic tomography show the potential of these methods.

\appendix

\section{Proof of Theorem \ref{thm:convergence}}
\label{sec:appendix}
\begin{proof}

Based on (\ref{eq:proj-solution}) and (\ref{eq:D}),
 \begin{align}
\bfy_k(\lambda,\gamma) = \bfC_k(\gamma,\lambda)\begin{bmatrix}
    \beta_1\bfe_1 \\ \textbf{0}
\end{bmatrix}
 \end{align}
With $k=n$,  by (\ref{e_bk}), (\ref{e_vk}), (\ref{e_uk}), and (\ref{eq:traceDC}),
\begin{align*}
    \bfs_{n} &= {\bfmu} + \bfQ \bfV_n\bfy_n\\
    & = {\bfmu} + \bfQ\bfV_n\bfC_n(\gamma,\lambda)\begin{bmatrix}
    \beta_1\bfe_1 \\ \textbf{0}
\end{bmatrix}\\
    & = \bfmu + \bfQ\bfV_n(\bfV_n\t\bfQ\t\bfA\t\bfL_{\bfR}\t\bfL_{\bfR}\bfA\bfQ\bfV_n + \lambda^2\bfV_n\t\bfQ\bfV_n)^{-1}\bfV_n\t\bfQ\bfA\t\bfR^{-1}\bfb\\
    & = \bfmu + \bfQ(\bfQ\t\bfA\t\bfL_{\bfR}\t\bfL_{\bfR}\bfA\bfQ + \lambda^2\bfQ)^{-1}\bfQ\bfA\t\bfR^{-1}\bfb\\
    &= {\bfmu} + \bfQ(\bfA\t\bfR^{-1}\bfA\bfQ + \lambda^2\bfI_n)^{-1}\bfA\t\bfR^{-1}\bfb \\
    & = \bfs_{\rm MAP}.
\end{align*}

Therefore, the solution for \eqref{eq:projUpre} converges to the solution for \eqref{eq:fullUpre} and the solution for \eqref{eq:proj_gcv} converges to the solution for \eqref{eq:fullGCV} as $k$ increases.

\end{proof}

\section{Proof of Lemma \ref{lemma:res_proj}}
\label{sec:appendix2}
\begin{proof}
For the projected residual for $\bfx_k$, \begin{equation*}
    \begin{array}{rcl}
        \label{eq:convRes}
        \left\| \bfr_k^{\rm proj}(\gamma,\lambda) \right\|^2_2 & =& \left\| \bfD(\gamma)\bfy_k(\gamma,\lambda) - \begin{bmatrix} \beta_1\bfe_1 \\ \textbf{0} \end{bmatrix}\right\|^2_2 \\
        & = & \left\| (\gamma \bfL_{\bfR}\bfA\bfQ_1\bfV_k + (1-\gamma)\bfL_{\bfR}\bfA\bfQ_2\bfV_k)\bfy_k(\gamma,\lambda)-\bfL_{\bfR}\bfb \right\|^2_2 \\
        & = & \left\| \begin{bmatrix} \bfI_{k+1} & \widetilde{\bfU}_{k+1}\bfA\bfQ_2\bfV_k \\ \textbf{0} & \bfR_k \end{bmatrix}\begin{bmatrix}\gamma \bfB_k \\ (1-\gamma)\bfI_k \end{bmatrix}\bfy_k(\gamma,\lambda)-\begin{bmatrix} \beta_1\bfe_1 \\ \textbf{0} \end{bmatrix} \right\|^2_2 \\
        & = & \left\| \begin{bmatrix} \widetilde{\bfU}_{k+1} & \bfL_{\bfR}\bfA\bfQ_2\bfV_k \end{bmatrix}\begin{bmatrix} \gamma \bfB_k \\ (1-\gamma)\bfI_k \end{bmatrix}\bfy_k(\gamma,\lambda)-\bfL_{\bfR}\bfb \right\|^2_2 \\
        & = & \left\| (\gamma \widetilde{\bfU}_{k+1}\bfB_k + (1-\gamma)\bfL_{\bfR}\bfA\bfQ_2\bfV_k)\bfy_k(\gamma,\lambda)-\bfL_{\bfR}\bfb \right\|^2_2 \\
        & = & \left\| (\gamma \bfL_{\bfR}\bfA\bfQ_1\bfV_k + (1-\gamma)\bfL_{\bfR}\bfA\bfQ_2\bfV_k)\bfy_k(\gamma,\lambda)-\bfL_{\bfR}\bfb \right\|^2_2 \\
        & = & \left\|\bfL_{\bfR}\bfA\bfQ\bfx_k(\gamma,\lambda)-\bfL_{\bfR}\bfb\right\|^2_2 
    \end{array}
\end{equation*} and as $k\rightarrow n$, \begin{equation*}
    \left\|\bfL_{\bfR}\bfA\bfQ\bfx_k(\gamma,\lambda)-\bfL_{\bfR}\bfb\right\|^2_2 \rightarrow \left\|\bfL_{\bfR}\bfA\bfQ\bfx_n(\gamma,\lambda)-\bfL_{\bfR}\bfb\right\|^2_2.
\end{equation*} Since $\left\|\bfL_{\bfR}\bfA\bfQ\bfx_n(\gamma,\lambda)-\bfL_{\bfR}\bfb\right\|^2_2 = \left\|\bfr^{\rm full}(\gamma,\lambda)\right\|^2_2$, \begin{equation*}
    \left\| \bfr_k^{\rm proj}(\gamma,\lambda) \right\|^2_2 \rightarrow  \left\|\bfr^{\rm full}(\gamma,\lambda)\right\|^2_2  
\end{equation*} as $k\rightarrow n$.\\

\noindent For $k$th iteration in the projected problem \eqref{eq:proj-solution}, \begin{equation*}
    \begin{array}{rcl}
        \label{eq:DtD}
        \bfD_k(\gamma)\t\bfD_k(\gamma)& = & \begin{bmatrix} 
        \gamma\bfB_k + (1-\gamma)\widetilde{\bfU}_{k+1}\t\bfL_{\bfR}\bfA\bfQ_2\bfV_k \\ (1-\gamma)\bfR_k\end{bmatrix}\t\begin{bmatrix}  \gamma\bfB_k + (1-\gamma)\widetilde{\bfU}_{k+1}\t\bfL_{\bfR}\bfA\bfQ_2\bfV_k \\ (1-\gamma)\bfR_k
        \end{bmatrix}\\
        & = & \gamma^2\bfB_k\t\bfB_k + 2\gamma(1-\gamma)\bfB_k\t\bfU\t_{k+1}\bfL_{\bfR}\t\bfL_{\bfR}\bfA\bfQ_2\bfV_k \\
        & & + (1-\gamma)^2\bfV_k\t\bfQ_2\t\bfA\t\bfL_{\bfR}\t\bfL_{\bfR}\bfU_{k+1}\bfU_{k+1}\t\bfL_{\bfR}\t\bfL_{\bfR}\bfA\bfQ_2\bfV_k +(1-\gamma)^2\bfR_k\t\bfR_k \\
        & = & \gamma^2\bfV_k\t\bfQ_1\t\bfA\t\bfL_{\bfR}\t\bfL_{\bfR}\bfA\bfQ_1\bfV_k + 2\gamma(1-\gamma)  \bfV_k\t\bfQ_1\t\bfA\t  \bfL_{\bfR}\t\bfL_{\bfR}\bfA\bfQ_2\bfV_k \\
        & & +  (1-\gamma)^2\bfV_k\t\bfQ_2\t\bfA\t\bfL_{\bfR}\t\bfL_{\bfR}\bfU_{k+1}\bfU_{k+1}\t\bfL_{\bfR}\t\bfL_{\bfR}\bfA\bfQ_2\bfV_k \\
        & & +(1-\gamma)^2\bfV_k\t\bfQ_2\t\bfA\t\bfL_{\bfR}\t( \bfI_{k+1}-
        \bfL_{\bfR}\bfU_{k+1}\bfU\t_{k+1}\bfL_{\bfR}\t)\bfL_{\bfR}\bfA\bfQ_2\bfV_k \\
        & = & \gamma^2\bfV_k\t\bfQ_1\t\bfA\t\bfL_{\bfR}\t\bfL_{\bfR}\bfA\bfQ_1\bfV_k + 2\gamma(1-\gamma) \bfV_k\t\bfQ_1\t\bfA\t\bfL_{\bfR}\t\bfL_{\bfR}\bfA\bfQ_2\bfV_k \\
        & & +  (1-\gamma)^2\bfV_k\t\bfQ_2\t\bfA\t\bfL_{\bfR}\t  \bfL_{\bfR}\bfA\bfQ_2\bfV_k \\
        & = & (\bfL_{\bfR}\bfA\bfQ\bfV_k)\t  \bfL_{\bfR}\bfA\bfQ\bfV_k
    \end{array}
\end{equation*} Therefore, \begin{equation}
    \begin{array}{rcl}
        \label{eq:traceDC}
         {\rm tr}(\bfD_k(\gamma)\bfC_k(\gamma,\lambda)) & = &  {\rm tr}(\bfD_k(\gamma)(\bfD_k(\gamma)\t\bfD_k(\gamma)+\lambda^2\gamma\bfI_k + \lambda^2(1-\gamma)\bfV_k\t\bfQ_2\bfV_k)^{-1}\bfD_k(\gamma)\t ) \\
         & = & {\rm tr}((\bfD_k(\gamma)\t\bfD_k(\gamma)+\lambda^2\gamma\bfI_k + \lambda^2(1-\gamma)\bfV_k\t\bfQ_2\bfV_k)^{-1}\bfD_k(\gamma)\t \bfD_k(\gamma)) \\
         & = & {\rm tr}(((\bfL_{\bfR}\bfA\bfQ\bfV_k)\t\bfL_{\bfR}\bfA\bfQ\bfV_k + \lambda^2 \bfV_k\t \bfQ\bfV_k)^{-1}(\bfL_{\bfR}\bfA\bfQ\bfV_k)\t(\bfL_{\bfR}\bfA\bfQ\bfV_k))\\
         & = & {\rm tr}((\bfL_{\bfR}\bfA\bfQ\bfV_k)((\bfL_{\bfR}\bfA\bfQ\bfV_k)\t\bfL_{\bfR}\bfA\bfQ\bfV_k + \lambda^2 \bfV_k\t \bfQ\bfV_k)^{-1}(\bfL_{\bfR}\bfA\bfQ\bfV_k)\t) \\
         & \rightarrow & {\rm tr}((\bfL_{\bfR}\bfA\bfQ\bfV_n)((\bfL_{\bfR}\bfA\bfQ\bfV_n)\t\bfL_{\bfR}\bfA\bfQ\bfV_n + \lambda^2 \bfV_n\t \bfQ\bfV_n)^{-1}(\bfL_{\bfR}\bfA\bfQ\bfV_n)\t) \\
         & = & {\rm tr}((\bfL_{\bfR}\bfA\bfQ\bfV_n)\bfV_n^{-1}((\bfL_{\bfR}\bfA\bfQ)\t\bfL_{\bfR}\bfA\bfQ + \lambda^2 \bfQ)^{-1}\bfV_n^{-\top}(\bfL_{\bfR}\bfA\bfQ\bfV_n)\t) \\
         & = & {\rm tr}((\bfL_{\bfR}\bfA\bfQ) ((\bfL_{\bfR}\bfA\bfQ)\t\bfL_{\bfR}\bfA\bfQ + \lambda^2 \bfQ)^{-1}  (\bfL_{\bfR}\bfA\bfQ)\t) \\
         & = & {\rm tr}(A(\gamma,\lambda))
    \end{array}
\end{equation} with the invertible $\bfV_n$ since $\bfV_n\t\bfQ_1 \bfV_n = \bfI_n$ and $\bfV_n\in\R^{n \times n}$ is square matrix.
\end{proof}

\section*{Acknowledgments}
This work was partially supported by NSF DMS 1723005 and NSF DMS 1654175. J. Chung would also like to acknowledge support from the Alexander von Humboldt Foundation. 

\bibliographystyle{siamplain}
\bibliography{data_genHyBR.bib}

\end{document}